\newtheorem{theorem}{Theorem}[section]
\newtheorem{lemma}[theorem]{Lemma}
\theoremstyle{definition}
\newtheorem{remark}[theorem]{Remark}
\newtheorem{defn}[theorem]{Definition}
\newtheorem{question}[theorem]{Question}
\numberwithin{equation}{section}
\def \mff{\mathsf}
\def \mc{\mathcal}
\def \inv{^{-1}}
\def \v{\vskip 0.1in}
\def \n{\noindent}
\def \real{\mathbb{R}}
\def \p{\partial}
\begin{document}

\begin{frontmatter}

\title{Interior Regularity for a generalized Abreu Equation \tnoteref{label1}}
\tnotetext[label1]{Li acknowledges the support of NSFC Grants NSFC11521061.
Sheng acknowledges the support of NSFC Grants NSFC11471225.}

%% use optional labels to link authors explicitly to addresses:
%% \author[label1,label2]{<author name>}
%% \address[label1]{<address>}
%% \address[label2]{<address>}
\author[Li]{An-Min Li}
\address[Li]{Yangtze Center of Mathematics Department of Mathematics
Sichuan University Chengdu, 610064, China}

\ead{anminliscu@126.com}
\author[Li]{Zhao Lian}
\address[Li]{Department of Mathematics Sichuan University
Chengdu, 610064, China}
\ead{zhaolian.math@gmail.com}

\author[Sheng]{Li Sheng\corref{cor1}}
\address[Sheng]{Department of Mathematics Sichuan University Chengdu, 610064, China}
\ead{lshengscu@gmail.com}
\cortext[cor1]{Corresponding author}
\begin{abstract}
We study a generalized Abreu Equation in $n$-dimensional polytopes and derive interior estimates of solutions under the assumption of the
uniform $K$-stability.
\end{abstract}

\begin{keyword}
%% keywords here, in the form: keyword \sep keyword
Interior estimates\sep  generalized Abreu Equation.
%% MSC codes here, in the form: \MSC code \sep code
%% or \MSC[2008] code \sep code (2000 is the default)
\MSC[2008] 53C55 \sep 35J60
\end{keyword}

\end{frontmatter}

%%
%% Start line numbering here if you want
%%
% \linenumbers

%% main text

\section{Introduction}\label{Sec-Intro}
%\v\n
The existence of extremal and contant scalar curvature is a central problem in K\"ahler geometry. In a series of papers \cite{D1}, \cite{D2}, \cite{D3}, and \cite{D4}, Donaldson studied
this problem on toric manifolds and proved the existence
of metrics of constant scaler curvatures on toric surfaces
under an appropriate stability condition. Later on in \cite{CLS1} and \cite{CLS2}, Chen, Li and Sheng proved the existence of metrics of prescribed scaler curvatures on toric surfaces under the uniform stability condition.
\v
It is important to generalize the results of Chen, Li and Sheng to more general K\"ahler manifold. This is one of a sequence of papers, aiming at generalizing the results of Chen, Li and Sheng to homogeneous toric bundles. The primary goal of this paper is to study the following nonlinear fourth-order partial differential equation
for an $n$-dimensional convex function $u$
\begin{equation}\label{eqn 1.1}
\frac{1}{\mathbb{D}}\sum_{i,j=1}^n\frac{\partial^2 \mathbb{D}u^{ij}}{\partial \xi_i\partial \xi_j}=-A.
\end{equation}
Here, $\mathbb{D}>0$ and $A$ are two given smooth functions on $\bar{\Delta}$ and $(u^{ij})$ is the inverse of the Hessian matrix $(u_{ij})$. The equation \eqref{eqn 1.1} was introduced by Donaldson \cite{D5}
in the study of the scalar curvature of toric fibration, see also \cite{R} and \cite{N-1}. In \cite{LSZ} the authors also derived this PDE in the study of the scalar curvature of homogeneous toric bundles. We call \eqref{eqn 1.1} a generalized Abreu Equation. The main result is the following interior estimate
\begin{theorem}\label{theorem_1.3}
Let $\Delta$ be a bounded open polytope in $\real^n$
and $\mathbb{D}>0$, $A$ be two smooth functions on $\bar\Delta$.
Suppose  $(\Delta, \mathbb{D}, A)$ is  uniformly $K$-stable and $u$ is a solution in $\mathbf{S}_{p_o}$
of  the equation \eqref{eqn 1.1}.
Then, for any $\Omega\subset\subset \Delta$, any nonnegative integer $k$ and any constant $\alpha\in (0,1)$,
\begin{equation*}
\|u\|_{C^{k+3,\alpha}(\Omega)}\leq C,
%\nonumber
\end{equation*}
where $C$ is a positive constant depending only on $n$, $k$, $\alpha$,
$\Omega$, $\mathbb{D}$, $\|A\|_{C^k(\bar\Delta)}$ and $\lambda$ in the uniform $K$-stability.
\end{theorem}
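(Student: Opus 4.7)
The overall strategy is the standard Donaldson-type scheme adapted to the weighted (generalized) Abreu equation: first reduce the $C^{k+3,\alpha}$ conclusion to an interior uniform strict convexity estimate for $u$, then extract that strict convexity from the uniform $K$-stability hypothesis, and finally bootstrap by means of the Legendre dual Monge--Amp\`ere picture together with linearized Schauder theory.

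More concretely, the plan is to argue as follows. First, I would show that once one has (i) a local uniform upper bound for $u$, (ii) a uniform interior strict convexity estimate, i.e.\ a positive lower bound for the modulus of convexity of $u$ on any $\Omega\subset\subset\Delta$, and (iii) a uniform upper and lower bound for $\det(u_{ij})$ on $\Omega$, then the Legendre transform $f$ of $u$ satisfies a Monge--Amp\`ere equation with bounded right-hand side on a compactly contained subdomain of the image. Caffarelli's interior $C^{2,\alpha}$ theory then yields $C^{2,\alpha}$ estimates for $f$, hence for $u$, and the generalized Abreu equation \eqref{eqn 1.1} linearized at $u$ is a fourth-order elliptic equation in divergence form whose coefficients involve $u^{ij}$ and $\mathbb{D}$; with $C^{2,\alpha}$ control of $u$ these coefficients are H\"older, so iterated Schauder estimates produce the desired $C^{k+3,\alpha}$ bound for every $k$. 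This bootstrap step is the most mechanical; I would carry it out last.

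The substantive work is therefore concentrated in steps (i)--(iii). For (i) one uses that $u\in\mathbf{S}_{p_o}$ already encodes a normalization at $p_o$ together with the convexity of $u$, which together with a compactness/Blaschke-type argument pins down $u$ on compact subsets up to a barrier constructed from the polytope. For (iii), the lower bound on $\det(u_{ij})$ is the crucial one and I would derive it from the equation \eqref{eqn 1.1} by testing against a cutoff of $-\log\det(u_{ij})$ (Donaldson's integration-by-parts trick, suitably reweighted by $\mathbb{D}$), using that $A$ and $\mathbb{D}$ are smooth on $\bar\Delta$ to absorb lower order terms; the upper bound on $\det(u_{ij})$ follows by comparison once strict convexity is available.

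The main obstacle, and the place where the uniform $K$-stability hypothesis enters essentially, is the interior strict convexity estimate (ii). The standard approach is a contradiction argument in the spirit of Donaldson and of Chen--Li--Sheng: if strict convexity fails along a sequence of solutions $u_k$, one rescales along the degenerating direction, passes to a blow-up limit supported on an affine subspace of $\Delta$, and extracts from this limit a nontrivial rational, piecewise linear convex function on $\Delta$ whose generalized Donaldson--Futaki invariant $\mathcal{F}(\mathbb{D},A)(\cdot)$ is nonpositive, contradicting the uniform $K$-stability inequality $\mathcal{F}(f)\ge \lambda\,\|f\|$. Making this blow-up analysis rigorous in the weighted, $n$-dimensional polytope setting---in particular controlling boundary contributions of the Futaki invariant under the rescaling and showing that the limiting destabilizer is admissible---is the technical heart of the argument, and is where I would expect to spend most of the effort.
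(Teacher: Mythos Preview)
Your overall architecture (determinant bounds, then Caffarelli--Guti\'errez, then bootstrap) matches the paper's, but you have the roles of the hypotheses reversed, and this leads you to plan a much harder argument than what the paper actually does.

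The uniform $K$-stability is used \emph{only} for your step (i): since $u$ solves \eqref{eqn 1.1} one has $\mathcal{L}_A(u)=\int_\Delta\sum u^{ij}u_{ij}\,\mathbb{D}\,d\mu=n\int_\Delta\mathbb{D}\,d\mu$, and stability then bounds $\int_{\partial\Delta}u\,\mathbb{D}\,d\sigma$; for a convex function normalized at $p_o$ this controls $u$ locally uniformly on $\Delta$. There is no blow-up or destabilizer construction anywhere in the paper's proof. Conversely, the determinant lower bound does not come from an integration-by-parts trick but from a feature of the hypothesis $u\in\mathbf S_{p_o}$ that you never invoke: since $u-v\in C^\infty(\bar\Delta)$ with $v$ the Guillemin potential, $\det(u_{ij})\to\infty$ at $\partial\Delta$, so $\mathbb F=\mathbb D/\det(u_{ij})$ vanishes on the boundary. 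Writing the equation as $\sum U^{ij}\mathbb F_{ij}=-A\mathbb D$, a direct maximum-principle argument (Lemma~\ref{lemma_3.1}) then gives $\det(u_{ij})\ge c(n,\mathbb D,\Delta,\sup A)$ on all of $\Delta$.

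The paper also does \emph{not} prove a uniform interior modulus of convexity. Instead: the global determinant lower bound forces the limit convex function to have Monge--Amp\`ere measure bounded below, hence to be strictly convex at some point in every ball (this is the Claim in Section~\ref{Sec-ProofMain}, referred to \cite{CLS5,CHLS}). Near one such point, Lemmas~\ref{lemma_3.1} and \ref{lemma_3.3} give two-sided determinant bounds on a section, and Lemma~\ref{lemma 4.1} upgrades this to smooth convergence there. Strict convexity is then propagated to all of $\Delta$ by passing to the Legendre transforms $f^{(k)}$, which are defined on all of $\mathbb R^n$: Lemma~\ref{lemma_3.5} supplies determinant bounds on arbitrary sections of $f^{(k)}$, and the convergence Theorem~\ref{theorem_4.2} (for the Legendre-dual form \eqref{eqn 1.3} of the equation) gives smooth convergence of $f^{(k)}$ on every section, hence of $u^{(k)}$ on all of $\Delta$. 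This Legendre-side globalization is the actual technical heart and is missing from your plan; your proposed blow-up route for (ii) is both unnecessary here and, in $n$ dimensions for the weighted equation, would be considerably harder to carry out.
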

A equivalent statement of Theorem \ref{theorem_1.3} is the following
\begin{theorem}\label{theorem_1.2}
Suppose that $(\Delta, \mathbb{D}, A)$ is  uniformly $K$-stable and that
$\{A^{(k)}\}$ is a sequence of smooth functions in $\bar\Delta$ such that
$A^{(k)}$ converges to $A$ smoothly in $\bar \Delta$.
Assume $u^{(k)}\in \mathbf{S}_{p_o}$ is a sequence of solutions of
the generalized Abreu Equation
\begin{equation}\label{eqn 1.6}
\sum_{i,j}\frac{\partial^2(\mathbb{D}u^{(k)ij})}{\partial\xi_i\partial\xi_j}=-A^{(k)}\mathbb{D}\quad\text{in }\Delta.
\end{equation}
Then there is a subsequence, still denoted by $u^{(k)}$, such that
$u^{(k)}$ converges smoothly, in any compact set $\Omega\subset \Delta$,
to some smooth and strictly convex function  $u$ in $\Delta$.
\end{theorem}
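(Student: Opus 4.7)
The plan is to establish, for each fixed $\Omega\subset\subset\Delta$ and each integer $k\ge 0$, uniform bounds of the form $\|u^{(k)}\|_{C^{k+3,\alpha}(\Omega)}\le C$, and then extract a convergent subsequence by Arzel\`a--Ascoli together with a diagonal argument over an exhaustion of $\Delta$. The bounds would follow the usual scheme for Abreu-type equations, carried out in four stages: (a) uniform $C^0$ control on $u^{(k)}$ on $\Omega$, (b) a uniform positive lower bound for $\det(u^{(k)}_{ij})$ on $\Omega$, (c) a uniform upper bound for $\det(u^{(k)}_{ij})$ on a slightly shrunken subdomain, and (d) a Schauder bootstrap to obtain higher regularity.

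For (a), I would use that $u^{(k)}\in\mathbf{S}_{p_o}$ is a normalized convex function on $\Delta$, so the uniform $K$-stability produces a bound on a linear functional of $u^{(k)}$ of the Donaldson $M$-type; convexity then converts this into a uniform $L^\infty$ bound on any interior compact set. Step (b) is the central difficulty. I would proceed by contradiction and blow-up: if $\det(u^{(k)}_{ij})(x_k)\to 0$ with $x_k\to x_\infty\in\Omega$, then rescale $u^{(k)}$ affinely so that the smallest eigenvalue of the Hessian at the base point is normalized to $1$, extract a limit convex function on a (possibly unbounded) limit domain, and show that its flat direction yields a destabilizing test configuration violating the uniform $K$-stability inequality with constant $\lambda$. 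This step converts the global algebraic stability hypothesis into a local, quantitative interior estimate and is the technical core of the paper.

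With (a) and (b) in hand, the equation \eqref{eqn 1.6} may be rewritten in the divergence form $U^{(k)ij}w^{(k)}_{ij}=-A^{(k)}\mathbb{D}$, where $w^{(k)}=\mathbb{D}/\det(u^{(k)}_{ij})$ and $U^{(k)ij}$ is the cofactor matrix; applying the maximum principle (or a Moser iteration) to this uniformly elliptic linear equation yields the upper bound on $w^{(k)}$ on a slightly smaller subdomain, giving step (c). Two-sided positive bounds on $\det(u^{(k)}_{ij})$ together with the uniform $C^0$ bound and strict convexity place us in the hypotheses of Caffarelli's interior $C^{2,\alpha}$ regularity for the Monge--Amp\`ere equation, producing a uniform $C^{2,\alpha}$ bound on $u^{(k)}$. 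Once the coefficients of the fourth-order linearization are uniformly $C^\alpha$, a standard Schauder bootstrap on \eqref{eqn 1.6} delivers $C^{k+3,\alpha}$ bounds for every $k$, completing step (d). Passing to a subsequence via a diagonal argument over an exhaustion of $\Delta$ by compact subdomains gives the claimed smooth locally uniform convergence; the limit $u$ is automatically smooth, strictly convex (since $\det u_{ij}$ inherits a positive lower bound on each $\Omega$), and solves \eqref{eqn 1.1}.

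The main obstacle is step (b), the uniform lower bound for $\det(u^{(k)}_{ij})$ on interior compact sets. The uniform $K$-stability is a global functional inequality on $(\Delta,\mathbb{D},A)$, and translating it into a local quantitative estimate on the Hessian via a blow-up limit requires delicate handling of the rescaled equation and the associated destabilizing configuration; the remaining steps are then essentially classical interior regularity for Monge--Amp\`ere and for linear fourth-order elliptic PDE.
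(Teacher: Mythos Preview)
Your outline follows the standard Abreu-equation pattern, but it misidentifies where the real work lies, and step (c) as written does not go through.

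First, the lower bound on $\det(u^{(k)}_{ij})$ is not the central difficulty and does not require stability or blow-up. Because $u^{(k)}\in\mathbf{S}$, the function $\mathbb{F}^{(k)}=\mathbb{D}/\det(u^{(k)}_{ij})$ vanishes on $\partial\Delta$, and the equation $\sum U^{ij}\mathbb{F}_{ij}=-A\mathbb{D}$ together with a direct comparison argument gives $\det(u^{(k)}_{ij})\ge c>0$ globally on $\Delta$ (this is the paper's Lemma~\ref{lemma_3.1}). Stability is used only for the $C^0$ control in step (a).

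Second, your step (c) is where the argument actually breaks. With only a lower bound on $\det(u^{(k)}_{ij})$, the operator $U^{(k)ij}\partial_{ij}$ is neither uniformly elliptic nor does it satisfy the structural hypotheses of Caffarelli--Guti\'errez (those require the Monge--Amp\`ere measure to be bounded above \emph{and} below). A maximum principle for $w^{(k)}$ needs boundary information you do not have on an interior subdomain, and Moser iteration is unavailable. What is missing is uniform strict convexity of the limit $u$: without it, sections of $u^{(k)}$ may degenerate and no local upper bound on $\det(u^{(k)}_{ij})$ is possible.

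The paper handles this differently. After the $C^0$ limit $u$ is obtained, one first shows (by a convexity argument, not stability) that in any ball there is a point $\xi_o$ at which $u$ is strictly convex. Near $\xi_o$ the sections of $u^{(k)}$ are uniformly compact, so Lemmas~\ref{lemma_3.1} and~\ref{lemma_3.3} give two-sided determinant bounds on a small section and hence smooth local convergence there. To propagate this to all of $\Delta$, the paper passes to the Legendre transforms $f^{(k)}$, which are defined on all of $\mathbb{R}^n$; a global upper bound on $\det(u^{(k)}_{ij})$ (Lemma~\ref{lemma_3.5}) is obtained via an auxiliary function involving $f^{(k)}$, and a separate convergence theorem for the equation in the $(x,f)$ variables (Theorem~\ref{theorem_4.2}) then yields smooth convergence of $f^{(k)}$, hence of $u^{(k)}$, everywhere. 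Your proposal omits both the strict-convexity step and the Legendre-transform passage, and these are precisely what make the interior upper bound work.
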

The main ideal of the proof is following:
\v
Note that, as Donaldson pointed out that, the uniform stability of $(\Delta, \mathbb{D}, A)$ implies
that there is a subsequence, still denoted by $u^{(k)}$, locally uniformly converging to $u$ in $\Delta$. The key point is to prove that $u$ is smooth and strictly convex. We consider the Legendre transform
$f^{(k)}$ of $u^{(k)}.$
Then $f^{(k)}$ satisfy the PDE
\begin{equation}\label{eqn 1.3}
-\sum_{i,j} f^{ij}\frac{\p^2(\log \mathbb F)}{\p x_i \p x_j} -\sum_{i,j}f^{ij} \frac{\p (\log \mathbb F)}{\p x_i}\frac{\p (\log \mathbb D)}{\p x_j} = A.\end{equation}
In Section \ref{Sec-Determinants}, we derive an uniform lower bound and an uniform upper bound of the determinants of the Hessian of $f^{(k)}$. We can not directly apply the  Caffarelli and Guti\'{e}rrez theory to the PDE \eqref{eqn 1.3}.
We prove a convergence theorem for this PDE in Section \ref{Sec-Convergence}. Then Theorem \ref{theorem_1.2}
follows.

\section{Uniform stability}\label{Sec-Uniform}

Let $\Delta$ be a Delzant polytope in $\real^n$, $c_k$ be a constant and
$h_k$ be an affine linear function in $\mathbb R^n$, $k=1, \cdots, d$.
Suppose that
$\Delta$ is defined by linear inequalities $h_k(\xi)-c_k>0$,  for $k=1, \cdots, d$,
where each $h_k(\xi)-c_k=0$ defines a facet of $\Delta$.
%({\bf Is $H_k$ a linear function?})
Write $\delta_k(\xi)=h_k(\xi)-c_k$
and set
\begin{equation}\label{eqn2.1}
v(\xi)=\sum_k\delta_k(\xi)\log\delta_k(\xi).
\end{equation}
This function was first introduced by Guillemin \cite{Guillemin1994}.
It defines a K\"ahler metric on the toric variety defined by $\Delta$.
We introduce several classes of functions. Set
\begin{align*}
\mc C&=\{u\in C(\bar\Delta):\, \text{$u$ is
convex on $\bar\Delta$ and smooth on $\Delta$}\},\\
\mathbf{S}&=\{u\in C(\bar\Delta):\, \text{$u$ is convex  on $\bar\Delta$
and $u-v$ is smooth on $\bar\Delta$}\},\end{align*}
where $v$ is given in \eqref{eqn2.1}.
For a fixed
point $p_o\in \Delta$, we consider
\begin{align*}
{\mc C}_{p_o}&=\{u\in \mc C:\, u\geq u(p_o)=0\},\\
\mathbf{S}_{p_o}&=\{ u\in \mathbf{S} :\, u\geq u(p_o)=0\}.\end{align*}
We say functions in ${\mc C}_{p_o}$ and ${\mathbf{S}}_{p_o}$ are {\it normalized} at $p_o$. Let
\begin{eqnarray*}
\mc C_\ast&=&\{u| \mbox{there exist a constant $C>0$  and  a  sequence of $\{u^{(k)}\}$ in ${\mc C}_{p_o}$ }\\
&&\mbox{such that
   $\int_{\partial\Delta}u^{(k)} \mathbb{D}d\sigma<C$ and
$u^{(k)}$  locally uniformly converges to} \\
&& \mbox{$u$ in $\Delta$}\}.
\end{eqnarray*}
For any $u\in \mc C_\ast,$   define $u$ on boundary as
$$u(q)=\lim_{\Delta\ni \xi\to q} u,\;\;\; q\in \partial \Delta.$$
Let $P>0$ be a  constant, we define
$$
\mc C_\ast^P=\{u \in\mc C_\ast| \int_{\partial\Delta}u \mathbb{D}d\sigma\leq P  \}.
$$

\v
Following \cite{N-1} we consider the  functional
\begin{equation}\label{eqn 2.2}
\mc F_A(u)=-\int_\Delta \log\det(u_{ij})\mathbb{D}d \mu+\mc L_A(u),
\end{equation}
where
\begin{equation}\label{eqn 2.3}
\mc L_A(u)=\int_{\partial\Delta}u \mathbb{D}d\sigma-\int_\Delta Au \mathbb{D} d\mu.
\end{equation}
$\mc F_A$ is called the Mabuchi functional
and $\mc L_A$ is closely related to the Futaki invariants. The Euler-Lagrangian equation for $\mc F_A$ is \eqref{eqn 1.1}.
It is known that,
if $u\in \mathbf{S}$ satisfies the equation \eqref{eqn 1.1}, then $u$ is an absolute minimizer for
$\mc F_A$ on $\mathbf{S}$.

\begin{defn}\label{defn_1.5}
Let $\mathbb{D}>0$ and $A$ be two smooth functions on $\bar\Delta$.
Then, $({\Delta},\mathbb{D},A)$ is called {\em uniformly $K$-stable} if
the functional $\mc L_A$ vanishes on affine-linear functions and
there exists a constant $\lambda>0$
such that, for any $u\in  {\mc C}_{p_o}$,
\begin{equation}\label{eqn 1.6}
\mc L_A(u)\geq \lambda\int_{\partial \Delta} u \mathbb{D}d \sigma.
\end{equation}
We also say that $\Delta$ is
$(\mathbb{D}, A,\lambda)$-stable.
\end{defn}
\begin{remark}
The conditions in Definition \ref{defn_1.5} are exactly the contents of Condition 1
\cite{N-1}. Following Donaldson we call it the {\em uniform $K$-stability}.
\end{remark}
Using the same method in \cite{CLS4} we immediately get
\begin{theorem}\label{theorem_2.3}
If the equation \eqref{eqn 1.1} has a solution in $\mathbf{S}$, then $(\Delta, \mathbb{D}, A)$ is uniform K-stable.
\end{theorem}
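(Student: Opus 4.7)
The plan is to extract uniform $K$-stability from the minimiser property: by hypothesis $u_0 \in \mathbf{S}$ solves \eqref{eqn 1.1} and hence realises $\inf_{\mathbf{S}}\mc F_A$, so $\mc F_A(w) - \mc F_A(u_0) \geq 0$ for every $w \in \mathbf{S}$. I would follow the variational scheme of \cite{CLS4} and deduce both conclusions of Definition~\ref{defn_1.5} from this single inequality, by comparing $u_0$ to cleverly chosen competitors.

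The vanishing of $\mc L_A$ on affine-linear functions is the easy half: for any affine $\ell$ and any $s\in\real$, the Hessian of $u_0 + s\ell$ agrees with that of $u_0$, so $u_0 + s\ell \in \mathbf{S}$ and the determinantal term in $\mc F_A$ is unchanged, giving $\mc F_A(u_0 + s\ell) - \mc F_A(u_0) = s\,\mc L_A(\ell)$. Non-negativity of this expression for every real $s$ forces $\mc L_A(\ell) = 0$. For the semistability step I would work along the ray $w_t := u_0 + tu$, $t > 0$, for smooth convex $u \in \mc C_{p_o}$ (so $w_t \in \mathbf{S}$), apply Minkowski's determinantal inequality $\det(u_{0,ij}+tu_{ij})^{1/n} \geq \det(u_{0,ij})^{1/n} + t\det(u_{ij})^{1/n}$, and plug the resulting bound $-\log\det(u_{0,ij}+tu_{ij}) \leq -n\log t - \log\det u_{ij}$ into $\mc F_A(w_t) \geq \mc F_A(u_0)$. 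Rearranging and dividing by $t$,
\[
\mc L_A(u) \geq \frac{n\log t}{t}\int_\Delta \mathbb{D}\,d\mu + \frac{1}{t}\int_\Delta(\log\det u_{ij} - \log\det u_{0,ij})\mathbb{D}\,d\mu,
\]
which yields $\mc L_A(u) \geq 0$ on sending $t \to \infty$, and extends to all of $\mc C_{p_o}$ by smoothing.

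The main obstacle is upgrading this semistability bound to the uniform inequality $\mc L_A(u) \geq \lambda\int_{\partial\Delta}u\mathbb{D}\,d\sigma$. I would proceed by contradiction: if no $\lambda > 0$ works, choose $u^{(k)} \in \mc C_{p_o}$ with $\int_{\partial\Delta}u^{(k)}\mathbb{D}\,d\sigma = 1$ and $\mc L_A(u^{(k)}) \to 0$; compactness in $\mc C_\ast^1$ then extracts a locally uniform limit $u^{(\infty)} \in \mc C_\ast$, under which the interior part of $\mc L_A$ passes to the limit and the boundary part is lower semicontinuous. The delicate step, which is the core of the argument in \cite{CLS4}, is to sharpen the Minkowski comparison above by tracking the Guillemin boundary behaviour of $u_0 = v + (\text{smooth})$, so that the pointwise determinantal estimate becomes an integral estimate with a positive coefficient in front of $\int_{\partial\Delta}u^{(k)}\mathbb{D}\,d\sigma$; this produces a lower bound on $\mc L_A(u^{(k)})$ incompatible with $\mc L_A(u^{(k)}) \to 0$. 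Handling the approximation of $u \in \mc C_{p_o}$ by elements of $\mathbf{S}_{p_o}$ on which the variational comparison is strictly valid is the other technical point, but it is routine once the boundary-sensitive form of the Minkowski estimate is in hand.
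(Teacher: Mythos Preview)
Your outline diverges from the paper's argument at the essential point, and the description of the uniformity step is too vague to be a proof.

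The paper (following \cite{CLS4}) does not use Minkowski's determinantal inequality at all. The engine is the integration-by-parts identity
\[
\mc L_A(u)=\int_\Delta v^{ij}u_{ij}\,\mathbb{D}\,d\mu,
\]
valid for the solution $v\in\mathbf{S}$ of \eqref{eqn 1.1} and any $u\in\mc C$; this is precisely \eqref{eqn_5.1a}. Nonnegativity of $\mc L_A$ is immediate from positivity of $(v^{ij})$ and convexity of $u$, with no need for the Minkowski argument or a limit in $t$. Vanishing on affine functions is likewise immediate.

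For the uniform lower bound the paper's core tool is Lemma~\ref{lemma_2.5}: if a sequence $u^{(k)}\in\mc C$ converges locally uniformly to $u\in\mc C_\ast^P$ and the one-dimensional Monge--Amp\`ere measure of $u$ along some segment $I\subset\subset\Delta$ satisfies $N(I)=m>0$, then $\mc L_A(u^{(k)})\geq\tau m$ uniformly in $k$. The proof plugs the identity above into a tube $I\times B$, where the eigenvalues of $(v^{ij})$ are bounded below, and reduces to $\int_B N^{(k)}_\xi(I)\,d\xi$. The contradiction argument from \cite{CLS4} then shows that a normalized limit with $\int_{\partial\Delta}u\,\mathbb{D}\,d\sigma>0$ must carry positive segment measure somewhere, forcing $\liminf\mc L_A(u^{(k)})>0$.

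Your proposed ``boundary-sensitive Minkowski'' step is not what \cite{CLS4} actually does, and as written it is only a hope, not a mechanism: you have not explained how the Guillemin asymptotics of $\det(u_{0,ij})$ near $\partial\Delta$ convert the pointwise inequality $\det(u_{0,ij}+tu_{ij})\geq\det(u_{0,ij})$ into a lower bound of the form $\lambda\int_{\partial\Delta}u\,\mathbb{D}\,d\sigma$. The missing idea is exactly the identity $\mc L_A(u)=\int_\Delta v^{ij}u_{ij}\mathbb{D}\,d\mu$ together with the segment lemma; once you have these, the Minkowski comparison is unnecessary.
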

Namely, the uniform K-stability is a necessary condition for existing a solution of \eqref{eqn 1.1} in $\mathbf{S}$. We pose the
\begin{question}\label{question_1.8}
Let $\Delta \subset \mathbb{R}^{n}$ be a Delzant polytope, $\mathbb{D}>0$ and $A$ be two smooth functions on $\bar\Delta$.
Does the uniform K-stability of $(\Delta, \mathbb{D}, A)$ imply that the equation \eqref{eqn 1.1} has a solution in $\mathbf{S}$?
\end{question}

Assume that $v\in \mathbf{S}_{p_o}$ is the solution of the equation \eqref{eqn 1.1}, and $u$ is a convex function. For any segment $I\subset\subset \Delta$, $u$ defines a convex function $w:=u|_I$ on $I$. It defines a Monge-Ampere measure on $I$, we denote this by $N$. The key point of the proof in \cite{CLS4} is the following lemma.
\begin{lemma}\label{lemma_2.5}
Let $u\in \mc C^P_\ast$  and $u^{(k)}\in \mathcal C$ locally uniformly converges to $u.$ If $N(I)=m>0$, then $$\mc L_A(u^{(k)})> \tau m$$ for some positive constant $\tau$ independent of k.
\end{lemma}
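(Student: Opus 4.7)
The plan is to dualize the functional $\mc L_A(u^{(k)})$ against the reference solution $v$, then localize the resulting integral to a tube around $I$ and exploit weak convergence of one--dimensional Monge--Ampere measures. The starting point is the identity obtained by integrating the generalized Abreu equation (1.1) satisfied by $v$, namely $\sum_{ij}\partial_{ij}(\mathbb{D}v^{ij})=-A\mathbb{D}$, against $u^{(k)}$. Double integration by parts combined with Guillemin's computation for the boundary behavior of $\mathbb{D}v^{ij}$ on $\partial\Delta$ yields, at least formally,
\begin{equation*}
\mc L_A(u^{(k)})\ =\ \int_\Delta v^{ij}\,u^{(k)}_{ij}\,\mathbb{D}\,d\mu.
\end{equation*}
Because $u^{(k)}\in\mc C$ need not lie in $\mathbf S$, this identity must be justified on an exhausting family $\Omega_\varepsilon\subset\subset\Delta$ and the boundary contributions on $\partial\Omega_\varepsilon$ shown either to converge to $\int_{\partial\Delta}u^{(k)}\mathbb{D}\,d\sigma$ or to have a sign compatible with the desired inequality. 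At worst one gets the one--sided bound $\mc L_A(u^{(k)})\ge \int_{\Omega}v^{ij}u^{(k)}_{ij}\mathbb{D}\,d\mu$ for every $\Omega\subset\subset\Delta$, which is all we need. The hypothesis $u\in \mc C^P_\ast$, which provides the uniform bound $\int_{\partial\Delta}u^{(k)}\mathbb{D}\,d\sigma$ for the approximating sequence implicit in the definition of $\mc C_\ast$, is what keeps these boundary contributions bounded through the limiting process.

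Next I would localize near $I$. Choose a cylindrical tube $T=I'\times B^{n-1}(\eta)\subset\subset\Delta$ whose axis is a slight thickening of $I$, with $T\subset\subset\Delta$. On $\overline T$ the smooth strictly convex function $v$ satisfies $(v^{ij})\ge c_0\,\mathrm{Id}$ and $\mathbb{D}\ge c_0$ for some $c_0>0$ depending only on the data and the tube, so
\begin{equation*}
\mc L_A(u^{(k)})\ \ge\ c_0^2\int_T u^{(k)}_{ee}\,d\mu\ =\ c_0^2\int_{B^{n-1}(\eta)} N_y^{(k)}(I_y)\,dy,
\end{equation*}
where $e$ is the unit direction of $I$, $I_y$ is the segment in $T$ parallel to $I$ through the transverse point $y$, and $N_y^{(k)}$ denotes the one--dimensional Monge--Ampere measure of the restriction $u^{(k)}|_{I_y}$. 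The equality is just Fubini applied to the distributional second derivative of the convex function $u^{(k)}$ in the direction $e$.

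Finally, since $u^{(k)}\to u$ uniformly on $\overline T$, the one--dimensional convex restrictions $u^{(k)}|_{I_y}$ converge uniformly to $u|_{I_y}$, and hence their distributional second derivatives converge weakly as measures. In particular $\liminf_k N_y^{(k)}(I_y)\ge N_y(\mathring I_y)$. The assignment $y\mapsto N_y(\mathring I_y)$ is lower semicontinuous at $y=0$ and by hypothesis $N_0(I)=m$, so $N_y(\mathring I_y)\ge m/2$ for $|y|\le \eta'$ with some $0<\eta'\le\eta$. Combining,
\begin{equation*}
\mc L_A(u^{(k)})\ \ge\ c_0^2\cdot\tfrac{m}{2}\cdot\mathrm{vol}\bigl(B^{n-1}(\eta')\bigr)\ =\ \tau\,m
\end{equation*}
for all $k$ sufficiently large, with $\tau>0$ independent of $k$.

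The main obstacle is the first step, the integration by parts: $u^{(k)}\in\mc C$ gives no control over the derivatives of $u^{(k)}$ at $\partial\Delta$, so the boundary terms must be handled entirely through the structure of $v$ as a Guillemin--type solution, together with the bound on $\int_{\partial\Delta}u^{(k)}\mathbb{D}\,d\sigma$ inherited from $u\in \mc C^P_\ast$. Once this is settled, the remaining ingredients---the pointwise lower bound on $v^{ij}\mathbb{D}$ on compact subsets, the Fubini decomposition of the Hessian trace, and the weak continuity of Monge--Ampere measures under uniform convergence of convex functions---are standard.
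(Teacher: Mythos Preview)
Your proposal is correct and follows essentially the same route as the paper: the identity $\mc L_A(u^{(k)})=\int_\Delta v^{ij}u^{(k)}_{ij}\,\mathbb{D}\,d\mu$ (which the paper simply asserts, without the integration-by-parts justification you sketch), restriction to a tube $I\times B\subset\subset\Delta$, the interior lower bound on $(v^{ij})$ and $\mathbb{D}$, Fubini along the axis of $I$, and weak convergence of the one-dimensional second-derivative measures. The only cosmetic difference is that the paper secures the uniform bound $N^{(k)}_\xi(I)\ge m/2$ for $\xi\in B$ and large $k$ by a direct contradiction argument (passing a bad diagonal subsequence $\xi_k\to 0$ to the limit against a compact subinterval $I_\epsilon$), whereas you phrase it via lower semicontinuity of $y\mapsto N_y(\mathring I_y)$ together with an implicit Fatou step; note also that the paper does not actually invoke the hypothesis $u\in\mc C^P_\ast$ anywhere in the proof.
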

In our present case this lemma still holds due to $C^{-1}\leq \mathbb{D}\leq C$ for some constant $C>0$. For reader's convenience we give the proofs here.
\v\n
{\bf Proof of Lemma \ref{lemma_2.5}.} Let $p$ be the midpoint of $I$.  We choose coordinate system $\{0,\xi\}$ such that
$p$ is the origin, $I$ is on the $\xi_1$ axis and  $I=(-a,a)$. Set $I_{\epsilon}=[-a+\epsilon,a-\epsilon].$ By choosing $\epsilon$ small we can assume that
\begin{equation}\label{eqn 2.7}
N(I_{\epsilon})\geq \frac{3m}{4}.
\end{equation}
 Suppose that there is a Euclidean ball $B:=B_{\epsilon_o}(0)$ in $\xi_1=0$ plane
such that
$I\times B\subset\subset \Delta$. Suppose that $u$ is a limit of a sequence $u^{(k)}\in \mc C$.
Then $u^{(k)}$ converges to $u$ uniformly on $I\times B$. We have
\begin{equation}\label{eqn_5.1a}
\mc L_A(u^{(k)})= \int_{\Delta} v^{ij} u^{(k)}_{ij}\mathbb{D}d\mu.
\end{equation}
Consider the functions
$$
w^{(k)}_{\xi}(\xi_1)=u^{(k)}(\xi_1,\xi),\;\;\; \xi_1\in I, \xi\in B.
$$
We denote by
$N^{(k)}_{\xi}$ the Monge-Ampere measure on $I$ induced by $w^{(k)}_{\xi}$ . We claim that
there exists a small $B$ and large $K$ such that for any $\xi\in B$, $k>K$
\begin{equation}
N^{(k)}_{\xi}(I)\geq m/2.
\end{equation}
In fact, if not, then there exists a subsequence of $k$, still denote by $k$, and a sequence of
$\xi_k\in B$ with $\xi_k\to 0$ such that $N^{(k)}_{\xi_k}(I)< m/2$. However, by the weakly convergence of Monge-Ampere measure, we have
  $$N(I_{\epsilon} )\leq \lim_{k\to\infty}N^{(k)}_{\xi_k}(I)\leq m/2,$$ this contradicts \eqref{eqn 2.7}.
\v
On the other hand, the eigenvalues of $v^{ij}$ are bounded below in $I\times B$, let $\delta$ be the lower bound. Then
\begin{eqnarray*}
\mc L_A(u^{(k)})
 &\geq& \int_{I\times B} v^{ij}u^{(k)}_{ij}\mathbb{D}d\mu
\geq \frac{\delta}{C} \int_{I\times B} Trace(u^{(k)}_{ij})d\mu \\
&\geq& \frac{\delta}{C}\int_{I\times B} u^{(k)}_{11}d\mu=\frac{\delta}{C}\int_B N^{(k)}_{\xi}(I)d\xi
\geq  \frac{m\delta}{2C} Vol(B).
\end{eqnarray*}
This completes the proof of Lemma \ref{lemma_2.5}.
\v\n
Then by the same method as in \cite{CLS4} we can prove Theorem \ref{theorem_2.3}.

\section{Estimates of the Determinant}\label{Sec-Determinants}

Set
\begin{equation}\label{eqn 3.1}
\mathbb{F}:=\frac{\mathbb D}{\det(u_{ij})},\;\;\;U^{ij}= \det(u_{kl})u^{ij}.\end{equation}
Since $\sum_{i} U^{ij}_{i}=0,$ the generalized Abreu Equation \eqref{eqn 1.1} can be written in terms of $(\xi, u)$ as
\begin{equation}\label{eqn 3.2}
-\sum_{i,j} U^{ij}\frac{\p^2\mathbb F}{\p \xi_i \p \xi_j}  = A\mathbb D.
\end{equation}
Through the normal map $\nabla u$ we can view $\mathbb{D}$ as function in $x$. In terms of $(x, f)$ the PDE \eqref{eqn 3.2} can be written as
\begin{equation}\label{eqn 2.1}
-\sum_{i,j} f^{ij}\frac{\p^2(\log \mathbb F)}{\p x_i \p x_j} -\sum_{i,j}f^{ij} \frac{\p (\log \mathbb F)}{\p x_i}\frac{\p (\log \mathbb D)}{\p x_j} = A.
\end{equation}

\subsection{\bf The lower bound of the determinant}\label{lower bound}
\v
\label{sect_3.1}
The following Lemma is proved in \cite{N-1} for toricfibration. It can be extend directly to the generalized Abreu Equation \eqref{eqn 1.1}.
\begin{lemma}\label{lemma_3.1} Let $\Delta$ be a bounded open polytope in $\real^n$ and $\mathbb{D}>0$, $A$ be two smooth functions on $\bar\Delta$. Let $u\in \mathcal{C}$ be a strictly convex function satisfying the generalized Abreu Equation \eqref{eqn 1.1}. Suppose that $\mathbb{F}=0$ on $\partial \Delta$. Then
$$\det ( u_{ij})\geq \mff C_1(sup_{\Delta} A)^{-n}$$ everywhere in $\Delta$,
where $\mff C_1$ is a constant depending on $n$, $\mathbb{D}$ and $\Delta$.
\end{lemma}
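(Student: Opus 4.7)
The plan is to apply the Alexandrov-Bakelman-Pucci (ABP) maximum principle to $\mathbb{F}$ itself, viewed as the solution of the linear second-order PDE
\[
a^{ij} \mathbb{F}_{ij} = -A\mathbb{D}, \qquad a^{ij} := U^{ij} = \det(u_{kl})\, u^{ij},
\]
on $\Delta$ with vanishing boundary data, and then to absorb the $\mathbb{F}$-dependence that appears on the right-hand side via a simple bootstrap.

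Since $u$ is strictly convex and smooth on $\Delta$ and $\mathbb{D}>0$, the function $\mathbb{F}$ is smooth and strictly positive in $\Delta$, vanishes on $\partial\Delta$ by hypothesis, and therefore attains its supremum at an interior point. The coefficient matrix $(a^{ij})$ is symmetric positive definite (though possibly degenerate near $\partial\Delta$), and its determinant is $\det(U^{ij}) = \det(u_{ij})^{n-1}$. Applying ABP to $\mathbb{F}$ with its zero boundary values yields
\[
\sup_\Delta \mathbb{F} \leq C_n\,\operatorname{diam}(\Delta)\left(\int_\Delta \frac{(A\mathbb{D})^n}{\det(u_{ij})^{n-1}}\,d\xi\right)^{1/n}.
\]
Substituting the identity $\det(u_{ij})^{-(n-1)} = \mathbb{F}^{n-1}/\mathbb{D}^{n-1}$ rewrites the right-hand side as
\[
C_n\,\operatorname{diam}(\Delta)\left(\int_\Delta A^n\, \mathbb{D}\, \mathbb{F}^{n-1}\, d\xi\right)^{1/n}.
\]

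Bounding the integrand by $(\sup_\Delta A)^n\,\sup_\Delta \mathbb{D}\,(\sup_\Delta\mathbb{F})^{n-1}$, pulling these constants out, dividing by $(\sup_\Delta\mathbb{F})^{(n-1)/n}$, and raising to the $n$-th power gives
\[
\sup_\Delta \mathbb{F} \leq C(n,\Delta,\mathbb{D})\cdot (\sup_\Delta A)^n.
\]
Combined with $\inf_\Delta \mathbb{D} > 0$, this yields the claim $\det(u_{ij}) = \mathbb{D}/\mathbb{F} \geq \mathsf{C}_1\,(\sup_\Delta A)^{-n}$.

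The main technical point is the validity of the ABP estimate in spite of the degeneracy of $(U^{ij})$ as one approaches $\partial\Delta$ (where $\det(u_{ij})\to\infty$). I would handle this by running the estimate on interior sublevel sets $\{\mathbb{F}\geq\varepsilon\}\subset\subset\Delta$, on which $(U^{ij})$ is uniformly elliptic; since $\mathbb{F}$ is continuous on $\bar\Delta$ with vanishing boundary trace and the integrand $A^n\mathbb{D}\mathbb{F}^{n-1}$ remains bounded, passing to the limit $\varepsilon\to 0$ recovers the global estimate. Apart from this verification, the argument is routine once one recognizes $U^{ij}\mathbb{F}_{ij}$ as the natural linear operator appearing in the generalized Abreu equation.
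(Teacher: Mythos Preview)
Your proof is correct and is essentially the standard argument for this estimate. The paper does not give its own proof of Lemma~\ref{lemma_3.1}; it simply cites Nyberg's thesis \cite{N-1} and remarks that the proof there extends directly to the generalized equation. The argument you outline---applying the ABP maximum principle to $\mathbb{F}$, which satisfies $U^{ij}\mathbb{F}_{ij}=-A\mathbb{D}$ with $\det(U^{ij})=\det(u_{kl})^{n-1}=(\mathbb{D}/\mathbb{F})^{n-1}$, and then absorbing the factor $\mathbb{F}^{n-1}$ on the right---is exactly the approach used by Donaldson \cite{D2} and Trudinger--Wang for the classical Abreu equation, and is almost certainly what the cited reference does as well. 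Two minor remarks: in the ABP integral you should write $((A\mathbb{D})^+)^n$ rather than $(A\mathbb{D})^n$, since $A$ is not assumed positive (though $\sup_\Delta A>0$ is forced, as otherwise $\mathbb{F}\le 0$ by the maximum principle); and your exhaustion by the superlevel sets $\{\mathbb{F}\ge\varepsilon\}\subset\subset\Delta$ is a clean way to avoid any regularity issues at $\partial\Delta$, though strictly speaking ABP only needs $\mathbb{F}\in C^2(\Delta)\cap C(\bar\Delta)$ and pointwise positivity of $(U^{ij})$, both of which hold here.
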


In the following we derive a more stronger estimate than Lemma \ref{lemma_3.1}, which will be used in our next papers.  First we prove a  preliminary lemma.
\begin{lemma}\label{lemma 3.2}
Let $\Delta$ be a bounded open polytope.
Suppose that $\mathbb{F}=0$ on $\partial \Delta$. Let $E$ be an edge of
$\Delta$. Suppose that $E$ is given by $\xi_1=0$.  Set $$v(\alpha,\beta,C) =- \xi_1^{\alpha}(C-\xi_1)^{\beta}\left(C - \sum_{j=2}^{n}
\xi_j^2\right)^{\beta},$$ where $\alpha,\beta,C$ are constants. Then   for any $ \frac{1}{2n}\leq\alpha,\beta \leq 1-\frac{1}{2n}$, there exists  constants $C,C_{1}>0$ depending only on $n$ and $diam(\Delta)$ such that
$v$ is strictly convex and
\begin{equation}
\det(v_{ij})>C_{1}(\epsilon_0) \xi_1^{n\alpha-2}.
\end{equation}
\end{lemma}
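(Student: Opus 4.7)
The plan is to exploit the product structure $v = -g(\xi_1)h(\xi')$, where $\xi' = (\xi_2,\ldots,\xi_n)$, $g(\xi_1) = \xi_1^\alpha(C-\xi_1)^\beta$ and $h(\xi') = (C-|\xi'|^2)^\beta$, so that the Hessian factors as
\begin{equation*}
(v_{ij}) = -\begin{pmatrix} g''\,h & g'\,\vec h^{\,T}\\ g'\,\vec h & g\,H'\end{pmatrix},\qquad \vec h = (h_i)_{i\ge 2},\quad H' = (h_{ij})_{i,j\ge 2}.
\end{equation*}
I would then apply the Schur complement / block determinant formula relative to the lower-right block, after first fixing $C = C(n,\mathrm{diam}(\Delta))$ large enough that (a) $\phi:=C-|\xi'|^2$ and $C-\xi_1$ are comparable to $C$ throughout $\bar\Delta$, and (b) the critical point $\xi_1 = \alpha C/(\alpha+\beta)$ of $g$ lies strictly outside $\Delta$.

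The easy ingredients come first. From $(\log g)'' = -\alpha/\xi_1^2-\beta/(C-\xi_1)^2<0$ and $g>0$ one gets $g''<0$, hence $-g''h>0$; and the direct expansion $H' = -2\beta\phi^{\beta-1}I - 4\beta(1-\beta)\phi^{\beta-2}\xi'(\xi')^T$ shows $H'$ is negative definite, so $-gH'$ is positive definite. Sherman--Morrison yields
\begin{equation*}
\det H' = (-2\beta)^{n-1}\phi^{(\beta-1)(n-1)-1}\Phi,\qquad \vec h^{\,T}(H')^{-1}\vec h = -\frac{2\beta|\xi'|^2\phi^\beta}{\Phi},
\end{equation*}
with $\Phi := C + (1-2\beta)|\xi'|^2 > 0$. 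Strict convexity and the determinant estimate then both reduce to the Schur complement
\begin{equation*}
S = \frac{\phi^\beta}{g}\Bigl[-gg'' - \frac{2\beta(g')^2|\xi'|^2}{\Phi}\Bigr],
\end{equation*}
which is positive precisely when $-gg''/(g')^2 > 2\beta|\xi'|^2/\Phi$. A direct computation, setting $s=\xi_1/C$, gives the closed form
\begin{equation*}
-\frac{g g''}{(g')^2} = \frac{\alpha(1-\alpha)(1-s)^2 + 2\alpha\beta\, s(1-s) + \beta(1-\beta)\,s^2}{(\alpha(1-s) - \beta s)^2},
\end{equation*}
and for $s\le\mathrm{diam}(\Delta)/C$ and $\tfrac{1}{2n}\le\alpha,\beta\le 1-\tfrac{1}{2n}$ this is bounded below by some $c(n)>0$, while $2\beta|\xi'|^2/\Phi \le 4\,\mathrm{diam}(\Delta)^2/C$ is arbitrarily small for $C$ large. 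This gives $S>0$ (so $v$ is strictly convex), and then $\det(v_{ij})=\det(-gH')\cdot S = (2\beta g)^{n-1}\phi^{(\beta-1)(n-1)-1}\Phi\cdot S$; collecting powers of $\xi_1$ through $g^{n-2}(g')^2 \ge c'(n)\,\xi_1^{n\alpha-2}$, with the remaining $\phi, \Phi, (C-\xi_1)$ factors all bounded below by constants depending only on $n$ and $\mathrm{diam}(\Delta)$, produces the asserted bound.

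The main obstacle is the uniformity of the Schur-complement estimate in $(\alpha,\beta)$. Because $\alpha$ can be as small as $1/(2n)$, the critical point $\alpha C/(\alpha+\beta)$ of $g$ threatens to enter $\Delta$ unless $C$ is taken proportional to $n\,\mathrm{diam}(\Delta)$, and the lower bound $c(n)$ for $-gg''/(g')^2$ itself degenerates as $\alpha$ or $\beta$ approach the endpoints of their allowed ranges. Making all of these estimates simultaneously quantitative is what pins down the dependence of the constants $C$ and $C_1$ on $n$ and $\mathrm{diam}(\Delta)$ alone, as required by the statement.
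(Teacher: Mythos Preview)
Your argument is correct and reaches the same estimate as the paper, but is organized differently. The paper does not use the block/Schur--complement formalism: instead it rotates the $(\xi_2,\ldots,\xi_n)$ coordinates pointwise so that $\xi=(\xi_1,\xi_2,0,\ldots,0)$, which reduces $(v_{ij})$ to a $2\times2$ block $\bigl(\begin{smallmatrix}v_{11}&v_{12}\\v_{12}&v_{22}\end{smallmatrix}\bigr)$ together with a diagonal tail $v_{ii}=-2\beta v/(C-\xi_2^2)$ for $i\ge3$, and then expands $v_{11}v_{22}-v_{12}^2$ by hand. That is precisely your Sherman--Morrison/Schur computation carried out in coordinates; the paper's choice $m=8n$ with $\Delta\subset\{\xi_1\le C/m\}\cap\{\sum_{j\ge2}\xi_j^2\le C/m\}$ plays exactly the role of your ``$C$ large in terms of $n\cdot\mathrm{diam}(\Delta)$''. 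Your packaging makes the structure (rank-one perturbation of a scalar matrix in the $\xi'$-block) more transparent, while the paper's explicit expansion makes the constants easier to read off; neither approach has a real advantage over the other.

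One genuine slip: the implication ``$(\log g)''<0$ and $g>0$ $\Rightarrow$ $g''<0$'' is false in general --- log-concavity only gives $gg''<(g')^2$. The conclusion $g''<0$ is nevertheless correct here, from the direct computation
\[
g''=-g\Bigl[\frac{\alpha(1-\alpha)}{\xi_1^{2}}+\frac{2\alpha\beta}{\xi_1(C-\xi_1)}+\frac{\beta(1-\beta)}{(C-\xi_1)^{2}}\Bigr]<0,
\]
which uses $0<\alpha,\beta<1$. The rest of your argument (the Schur complement positivity and the power-counting $g^{n-2}(g')^2\gtrsim\xi_1^{\,n\alpha-2}$) is unaffected.
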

\begin{proof} Choose  $C>0$  large such
that
\begin{equation}\label{eqn 3.5}
\Delta \subset \left \{\xi \;|\xi_1\leq
\frac{C}{m}\right\}\bigcap \left\{\xi\;|\sum_{j=2}^{n} \xi_j^2 \leq
\frac{C}{m}\right\},
 \end{equation}
 where $m=8n$.   We calculate $det(v_{ij})$. For any point
$\xi$, By taking an orthogonal transformation of $\xi_2,...,\xi_n$,
we may assume that $\xi = (\xi_1,\xi_2,0,...,0)$. By a direct
calculation we have
$$v_{11}= -v\left[-\left(\frac{\alpha }{\xi_{1} } -
\frac{\beta}{C-\xi_{1}}\right)^2+\frac{\alpha}{\xi_{1}^{2}} +
\frac{\beta}{(C-\xi_{1})^{2}} \right],$$
$$v_{12} =-v \left(\frac{\alpha }{\xi_{1} } -
\frac{\beta}{C-\xi_{1}}\right)\frac{2\beta
\xi_2}{ C-\xi_2^2  },\;\;\;\;\;v_{ij}= 0,\;\;i>2, \;\;i\ne j.$$$$v_{22}=-v\left[\frac{2\beta(C+\xi_2^2)}{(C-\xi_2^2)^{2}}
- \frac{4\beta^2 \xi_2^2}{(C-\xi_2^2)^{2}}\right],\;\;\;\;v_{ii}= -v\frac{2\beta}{ C-\xi_2^2 } ,\;\;\;i>2.$$ Denote $A-B= v_{11}v_{22}-v_{12}^2 ,\;D=\prod_{i=3}^{n} v_{ii}.$ The determinant of
$(v_{ij})$ is $\det(v_{ij}) =(A-B)\cdot D.$ A direct calculation gives us
\begin{align*}
A-B=&\frac{2\beta v^2}{\xi^2_{1} (C-\xi_{1})^2(C-\xi_{2}^2)^2}\left[ \alpha(C-\xi_{1})^2((1-\alpha)C+ (1-2\beta- \alpha)\xi_{2}^2)\right.\\
 &\left.+ \beta \xi_{1}^2 ((1-\beta)C+ (1-3\beta)\xi_{2}^2) +2\alpha \beta\xi_{1}(C-\xi_{1})(C+\xi_{2}^2)  \right] \\
D=&\prod_{i=3}^{n} v_{ii}=\left[-v\frac{2\beta}{ C-\xi_2^2 }\right]^{n-2}.
\end{align*}
 For any $\alpha,\beta$ satisfy $\frac{1}{2n}\leq \alpha,\beta\leq
1-\frac{1}{2n}$, by $m>4(2n-1),$ we have
\begin{equation}\label{3.3}
A-B\geq \frac{ \alpha\beta v^2}{\xi^2_{1} (C-\xi_{2}^2)^2}\frac{C(2n-1)}{2nm} .
\end{equation}
It is easy to check that $v$ is strictly convex and
\begin{equation}
\det(v_{ij})>C(n) \xi_1^{n\alpha-2}.
\end{equation}
\end{proof}

Now we prove
  \noindent \vskip
0.1in \noindent
\begin{lemma} Let $\Delta$ be a bounded open polytope in $\real^n$
and $\mathbb{D}>0$, $A$ be two smooth functions on $\bar\Delta$.
Let $u\in \mathcal{C}$ be a strictly convex function satisfying the generalized Abreu Equation \eqref{eqn 1.1}.
Suppose that $\mathbb{F}=0$ on $\partial \Delta$. Let $E$ be an edge of
$\Delta$. Suppose that $E$ is given by $\xi_1=0$. Let $p\in E^o$.
Then the following estimate holds in a neighborhood of $p$
$$ det(u_{ij})\geq \frac{b}{\xi_1}$$
for some constant $b>0$ depending only on $n$, $diam(\Delta)$, $\max_{\bar\Delta}\mathbb D$, $\min_{\bar\Delta}\mathbb D$ and $\|A\|_{L^{\infty}(\Delta)}$.
\end{lemma}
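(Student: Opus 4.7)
The plan is to prove the equivalent bound $\mathbb{F}\le C\xi_1$ in a neighborhood of $p$, where $\mathbb{F}=\mathbb{D}/\det(u_{ij})$, and then invert to read off $\det(u_{ij})\ge b/\xi_1$. Since $\mathbb{F}$ satisfies the linearized Abreu equation
$$-U^{ij}\mathbb{F}_{ij}=A\mathbb{D},\qquad U^{ij}=\det(u_{kl})u^{ij},$$
with Dirichlet condition $\mathbb{F}|_{\partial\Delta}=0$, a barrier/maximum-principle argument is the natural tool.

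First I fix a neighborhood $\Omega$ of $p$ in $\bar\Delta$ with $\partial\Omega\cap\partial\Delta\subset E^o$ and coordinates in which $E=\{\xi_1=0\}$ locally. Then I introduce the convex barrier $v=v(\alpha,\beta,C)$ from Lemma~\ref{lemma 3.2}, which vanishes on $E$, is negative on $\Delta$, and satisfies $\det(v_{ij})\ge C_1\xi_1^{n\alpha-2}$. The key inequality is the arithmetic--geometric-mean bound applied to the positive-definite pair $(U^{ij},v_{ij})$:
$$U^{ij}v_{ij}\ge n\bigl(\det(U^{ij})\det(v_{ij})\bigr)^{1/n}=n\bigl(\mathbb{D}/\mathbb{F}\bigr)^{(n-1)/n}\det(v_{ij})^{1/n}.$$
Combined with the a priori upper bound $\mathbb{F}\le C$ from Lemma~\ref{lemma_3.1} and the Hessian estimate just above, this yields $U^{ij}v_{ij}\ge c_0\,\xi_1^{\alpha-2/n}$, uniformly bounded below on $\Omega$ whenever $\alpha\le 2/n$, a choice that is admissible within the range $[1/(2n),1-1/(2n)]$ in every dimension.

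Next I apply the comparison principle to $w:=\mathbb{F}+Kv$. For $K$ large enough depending only on $n$, $\|A\|_{L^\infty}$, $\max_{\bar\Delta}\mathbb{D}$, $\min_{\bar\Delta}\mathbb{D}$ and $\mathrm{diam}(\Delta)$ I expect
$$-U^{ij}w_{ij}=A\mathbb{D}-K\,U^{ij}v_{ij}\le 0 \text{ in }\Omega.$$
The maximum principle then forces $w\le\max_{\partial\Omega}w$; since $w=0$ on $E\cap\partial\Omega$ and $w=\mathbb{F}+Kv$ is strongly negative on the remainder of $\partial\Omega$ (where $|v|$ is bounded below), this gives $w\le 0$ on $\Omega$, hence $\mathbb{F}\le K|v|\le C\xi_1^\alpha$ near $p$.

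The main obstacle is sharpness: Lemma~\ref{lemma 3.2} forces $\alpha<1$, so this first-pass argument only delivers $\mathbb{F}\le C\xi_1^\alpha$, strictly weaker than the linear rate the statement demands. To close the gap I expect to replace $v$ by a refined Guillemin-type barrier, essentially of the form $-\xi_1\log\xi_1$ in the normal direction multiplied by a negative-definite quadratic in the tangential directions, whose Hessian determinant is genuinely of order $\xi_1^{-1}$. Rerunning the AM--GM comparison with this sharper barrier should yield the endpoint bound $\mathbb{F}\le C\xi_1$; verifying that the refined barrier retains the requisite lower bound on $U^{ij}v_{ij}$ uniformly as $\xi_1\to 0$ is the delicate technical point on which the proof hinges.
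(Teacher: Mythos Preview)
Your first-pass comparison argument is correct and matches the paper's opening move: with the barrier $v=v(\alpha,\tfrac12,C)$ of Lemma~\ref{lemma 3.2} and only the constant bound $\det(u_{ij})\ge d_1$ from Lemma~\ref{lemma_3.1}, the AM--GM step and the maximum principle give $\mathbb{F}\le C\xi_1^{\alpha}$ for some $\alpha<1$. You are also right that this is not yet the linear rate.

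The gap is in your proposed remedy. A ``Guillemin-type'' barrier built from $-\xi_1\log\xi_1$ cannot close the argument in one shot. If $v$ behaves like $\xi_1\log\xi_1$ in the normal direction (so that $\det(v_{ij})\sim\xi_1^{-1}$), then $|v|\sim\xi_1|\log\xi_1|$ near $E$, and the maximum-principle conclusion $\mathbb{F}\le K|v|$ only yields $\mathbb{F}\le C\xi_1|\log\xi_1|$, still short of $C\xi_1$. More generally, there is no convex barrier $v$ with simultaneously $|v|\lesssim\xi_1$ and $\det(v_{ij})$ large enough to make $U^{ij}v_{ij}$ dominate a constant when all you know is $\det(u_{ij})\ge d_1$: the factor $\det(u_{ij})^{(n-1)/n}$ in the AM--GM bound is too weak.

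What the paper actually does is \emph{bootstrap}: feed each improved lower bound for $\det(u_{ij})$ back into the AM--GM inequality. Starting from $\det(u_{ij})\ge d_1$, one runs the comparison with $v(\alpha_k,\tfrac12,C)$ for a sequence $\alpha_k$ satisfying $\alpha_k-\tfrac{2}{n}=(1-\tfrac{1}{n})\alpha_{k-1}$; at each step the previously obtained bound $\det(u_{ij})\ge b\,\xi_1^{-\alpha_{k-1}}$ makes the exponent in $\det(u_{ij})^{(n-1)/n}\det(v_{ij})^{1/n}$ vanish, and one upgrades to $\det(u_{ij})\ge b'\xi_1^{-\alpha_k}$. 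After finitely many iterations this reaches $\det(u_{ij})\ge b_0\,\xi_1^{-(1-1/n)}$. Only then does the paper switch to a \emph{different} barrier,
\[
v'=\xi_1^{\,\alpha}\Big(C+\sum_{j\ge 2}\xi_j^2\Big)-a\,\xi_1,\qquad \alpha=1+\tfrac{1}{n^2}>1,
\]
which satisfies $|v'|\le a\,\xi_1$ near $E$ and $\det(D^2v')\ge C_1\xi_1^{\,n\alpha-2}$. With the improved bound $\det(u_{ij})^{(n-1)/n}\ge C\xi_1^{-(1-1/n)^2}$ the combined exponent is exactly zero, the comparison principle applies to $\mathbb{F}+b_5 v'$, and one concludes $\mathbb{F}\le C|v'|\le C\xi_1$.

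So the two missing ingredients are (i) the iteration that strengthens the $\det(u_{ij})$ lower bound from constant to $\xi_1^{-(1-1/n)}$, and (ii) the final barrier with exponent $>1$ whose size is genuinely linear in $\xi_1$. Neither is supplied by a single $\xi_1\log\xi_1$-type comparison.
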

\begin{proof} First we prove that there exists a constant $b_{0}>0$ such that
\begin{equation}\label{eqn 3.7}
det(u_{ij})\geq
b_0\xi_1^{-(1-\frac{1}{n}) }.\end{equation}
Choose    $\beta=\frac{1}{2}$ in Lemma \ref{lemma 3.2}. Let $C>0$ and $m=8n$ be constants such
that \eqref{eqn 3.5} holds. We discuss two cases. \v\n
{\bf Case 1.} $n=2.$ We choose $\alpha=\frac{1}{2}$ and consider
the following function
$$h= \mathbb F + b_1v.$$
Obviously, $h<0$ on $\partial \Delta$. We have
\begin{eqnarray*}
\sum U^{ij}h_{ij}& =& -A\mathbb D  + b_1 \det(u_{ij})\sum u^{ij}v_{ij}
\\
&\geq&
-A\mathbb D + nb_1 \det(u_{ij})^{1-1/2}(\det(v_{ij}))^{1/2}\\
&\geq& -A\mathbb D +nb_1d_1C(\epsilon_0).
\end{eqnarray*}
Here we use the estimate $\det(D^2u)\geq d_1$. By  choosing the
constant $b_1$ large, we have $\sum U^{ij}h_{ij} \geq 0$. So $h$
attains its maximum on $\partial \Delta$. Then $w \leq b_1 \mathbb D^{-1} |v|.$ It
follows that
$$\det(u_{ij})\geq b_2\xi_1^{\frac{-1}{2}}.$$
for some constant $b_2$.
\v\n
{\bf Case 2. } $n\geq 3$. Choose a sequence $\{\alpha_{k}\}$ such that
$$
 \alpha_{k}=2\left(1-(1-\tfrac{1}{n})^{k}\right),\;\;\;\;\;\;\forall k\geq 1.
$$
Obviously,
\begin{equation}\label{eqn 3.8}
\alpha_{k}-\frac{2}{n}=(1-\frac{1}{n})\alpha_{k-1},\;k\geq 2,
\end{equation}
and there is $k^\star\in \mathbb Z^{+}$ such that $\alpha_{k^\star}<1-\frac{1}{n}$ and $\alpha_{k^\star+1}\geq 1-\frac{1}{n}.$
\v\n
We first let $\alpha=\alpha_{1}$, $h= \mathbb F + b_2v$. By the same argument as in Case 1 we get
$$
\det(u_{ij})\geq b_2'\xi_1^{\frac{-2}{n}}.
$$
Next we let $\alpha=\alpha_{2}$, $h= \mathbb F + b_3v$. Then
\begin{eqnarray*}
\sum U^{ij}h_{ij}& \geq &  -A\mathbb D + nb_3 \det(u_{ij})^{1-1/n}(\det(v_{ij}))^{1/n}\\
&\geq& -A\mathbb D +nb_3 b_{2'}^{1-\frac{1}{n}}\xi_1^{-\alpha_1(1-\frac{1}{n})+\alpha_{2}-\frac{2}{n}}   \geq -A\mathbb D +nb_3 b_{2'}^{1-\frac{1}{n}}.
\end{eqnarray*}
We choose $b_3$ such that $\sum U^{ij}h_{ij}>0$. Then we have
$$
\det(u_{ij})\geq b_3'\xi_1^{-\alpha_2}.
$$
We iterate the process to improve the estimate. After finite many steps we get $\det(u_{ij})\geq b'\xi_1^{-\alpha_{k^\star}}.$ Then we set $\alpha=1-\tfrac{1}{n},$ and repeat the argument above to get
\eqref{eqn 3.7}.

\v
  Next we consider the function
$$v'= \xi_1^{\alpha}\left(C + \sum_{j=2}^{n}
\xi_j^2\right) - a \xi_1,$$ where $a>0$, $\alpha >1$ are constants,
$C>0$ is the constant as before. We choose $a$ large such that
$v'\leq 0$ on $\Delta$. For any point $\xi$ we may assume that $\xi
= (\xi_1,\xi_2,0,...,0)$. By a direct calculation we have
$$v'_{11}= \alpha (\alpha-1)\xi_1^{\alpha-2}(C+
\xi_2^2),$$
$$v'_{ii}= 2\xi_1^{\alpha}\;\;\;i\geq 2,\;\;\;\;v'_{12}= 2\alpha \xi_2\xi_1^{\alpha-1},$$
$$det(D^2v')=
2^{n-1}\left[\alpha (\alpha-1)(C+ \xi_2^2) -
2\alpha^2\xi_2^2\right]\xi_1^{n\alpha-2}.$$ Then for large $C$, we
conclude that $v'$ is convex and
\begin{equation}
det(D^2v')\geq C_1\xi_1^{n\alpha-2}.
\end{equation}
Set $\alpha=1+\frac{1}{n^2}$.
 Consider
the function
$$h' = \mathbb F+ b_5v'.$$
Obviously, $h' < 0$ on $\partial \Delta$. We have
\begin{eqnarray*}
\sum U^{ij}h_{ij}& =& -A\mathbb D  + b_5\det(u_{ij})\sum u^{ij}v'_{ij}\\
& \geq&
-A\mathbb D  + nb_5 \det(u_{ij})^{1-1/n}\det(v'_{ij})^{1/n}\\
&\geq& -A\mathbb D  + nb_5
C(n)\xi_1^{-(1-\frac{1}{n})^2}C_1\xi_1^{\alpha-\frac{2}{n}}
\\
&=&-A\mathbb D +nb_5C(n)C_1.
\end{eqnarray*}
We choose $b_5$ such that
$\sum U^{ij}h_{ij}\geq 0$. By the maximum principle we have $w \leq C_5 \mathbb D^{-1} |v'|\leq aC_5\xi_1$. It
follows that % for any facet $F$,
 $\det(u_{ij})(\xi)\geq {\mff C_5}{\xi_1}\inv $ for some
constant $\mff C_5>0$ independent of $p.$
\end{proof}

\subsection{\bf  The upper bound of the determinant }\label{sect_4.2}
\v

Let $u\in \mathbf{S}_{p_o}$ be a solution of the generalized Abreu Equation \eqref{eqn 1.1}. In this section, we derive a global upper bound of the determinant of the
Hessian of $u$.
The proof of the following lemma is standard
\begin{lemma}\label{lemma_3.3}
Suppose that $u\in \mathbf{S}_{p_o}$ satisfies the generalized Abreu Equation \eqref{eqn 1.1}. Assume
that the section
$$\bar{S}_u(p_o,C)=\{\xi\in \Delta:\, u(\xi)\leq C\}$$
is compact and that there is a constant $b>0$ such that $$\sum
_{k=1}^n \left(\frac{\partial u}{\partial \xi_k}\right)^2 \leq b
\quad\text{on }\bar{S}_u(p_o,C).$$ Then,
$$\det (u_{ij})\leq \mff C_2\quad\text{in }S_u(p_o,C/2),$$
where $ \mff C_2$ is a positive constant depending on
$n$, $C$ and $b$.
\end{lemma}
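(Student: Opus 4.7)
My plan is to apply the maximum principle on the compact section $\bar{S}_u(p_o,C)$ to an auxiliary function that couples $\log\det(u_{ij})$ with a logarithmic barrier vanishing on $\partial S_u(p_o,C)$ and a quadratic gradient term meant to absorb first--order noise. Specifically, I would set
$$\Phi(\xi) \;=\; \log\det(u_{ij}) \;+\; \alpha\,|\nabla u|^2 \;+\; \beta\,\log\bigl(C-u(\xi)\bigr)$$
on $\bar S_u(p_o,C)$, where $\alpha,\beta>0$ are constants to be chosen. Since $\log(C-u)\to -\infty$ as $\xi\to\partial S_u(p_o,C)$, the function $\Phi$ attains its maximum at some interior point $p^*$, where $\nabla\Phi(p^*)=0$ and $\sum U^{ij}\Phi_{ij}(p^*)\le 0$, with $U^{ij}:=\det(u_{kl})u^{ij}$.

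Using $\log\det(u_{ij})=\log\mathbb D-\log\mathbb F$ together with equation \eqref{eqn 3.2}, rewritten as
$$\sum U^{ij}(\log\mathbb F)_{ij} \;=\; -A\,\det(u_{ij}) \;-\; \sum U^{ij}(\log\mathbb F)_i(\log\mathbb F)_j,$$
I would derive
$$\sum U^{ij}(\log\det u)_{ij} \;=\; \sum U^{ij}(\log\mathbb D)_{ij} \;+\; A\,\det(u_{ij}) \;+\; \sum U^{ij}(\log\mathbb F)_i(\log\mathbb F)_j,$$
the last term being non--negative. For the barrier and gradient contributions, I would use the identities $\sum_i U^{ij}_i=0$ and $\sum_j U^{ij}u_{kj}=\det(u)\delta_{ki}$ to compute closed forms; the key feature is that $\sum U^{ij}(|\nabla u|^2)_{ij}$ produces a leading term proportional to $\det(u)\,\mathrm{tr}(u_{ij})$, while $\sum U^{ij}(\log(C-u))_{ij}$ produces leading terms proportional to $\det(u)/(C-u)$ and $\det(u)\sum u^{ij}u_i u_j/(C-u)^2$.

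After substituting $\nabla\Phi(p^*)=0$ to eliminate the first derivatives of $\log\det u$ in favor of $\nabla|\nabla u|^2$ and $\nabla\log(C-u)$, and after using the hypothesis $|\nabla u|^2\le b$ together with the arithmetic--geometric mean inequality $\mathrm{tr}(u_{ij})\ge n\det(u_{ij})^{1/n}$, the inequality $\sum U^{ij}\Phi_{ij}(p^*)\le 0$ becomes an algebraic inequality forcing an upper bound of the form $\det(u_{ij})(p^*)\,(C-u(p^*))^{q}\le C'$ for an explicit exponent $q$ and constant $C'$ depending only on $n,C,b,\mathbb D,\|A\|_{L^\infty}$. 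Since $C-u\ge C/2$ on $\bar S_u(p_o,C/2)$ and $\Phi(\xi)\le\Phi(p^*)$ throughout, this translates to the desired uniform upper bound $\det(u_{ij})\le \mathfrak C_2$ on $S_u(p_o,C/2)$.

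The main obstacle is choosing the sign of $\alpha$ and the magnitudes of $\alpha,\beta$ so that, at the interior maximum, the combination of the $\det(u)\,\mathrm{tr}(u_{ij})$ and $\det(u)/(C-u)$ contributions produces a dominant, superlinear-in-$\det(u)$ term with the correct sign to contradict $\sum U^{ij}\Phi_{ij}\le 0$ when $\det(u_{ij})(p^*)$ is large. The non--negative quadratic $\sum U^{ij}(\log\mathbb F)_i(\log\mathbb F)_j$ is available as an absorbing good term to control stray cross products produced by the $\mathbb D$--dependence of the PDE via Cauchy--Schwarz, but matching the constants so that every contribution either is bounded by the data or is dominated by the leading $\det(u)^{1+1/n}$ term is the delicate step.
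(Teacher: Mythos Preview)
The paper does not actually give a proof of this lemma; it simply introduces it with the sentence ``The proof of the following lemma is standard.'' Your outline---a Pogorelov-type auxiliary function combining $\log\det(u_{ij})$, a barrier $\log(C-u)$, and a gradient term $|\nabla u|^2$, followed by the maximum principle for the linearized operator $U^{ij}\partial_i\partial_j$---is exactly the standard argument the authors have in mind (compare the closely related computation they \emph{do} carry out in Lemma~3.5, and the $\mathbb D\equiv 1$ case in \cite{CHLS}). So your proposal is correct and matches the paper's intended route.

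Two small remarks on the execution. First, it is slightly cleaner to take $-\log\mathbb F$ rather than $\log\det(u_{ij})$ as the leading term in $\Phi$: since $\log\det(u_{ij})=-\log\mathbb F+\log\mathbb D$ with $\log\mathbb D$ uniformly bounded, the two choices are equivalent for the final conclusion, but with $-\log\mathbb F$ the equation gives directly
\[
\sum U^{ij}(-\log\mathbb F)_{ij}=A\det(u_{ij})+\sum U^{ij}(\log\mathbb F)_i(\log\mathbb F)_j,
\]
avoiding the second-order term $\sum U^{ij}(\log\mathbb D)_{ij}$, which is of order $\det(u)\,\mathrm{tr}(u^{ij})$ and not obviously dominated by your leading $\det(u)\,\mathrm{tr}(u_{ij})$ term. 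Second, once you expand the good quadratic $\sum U^{ij}(\log\mathbb F)_i(\log\mathbb F)_j$ at $p^*$ using the first-order condition, it produces a term $\beta^2\det(u)\sum u^{ij}u_iu_j/(C-u)^2$ that exactly absorbs the ``bad'' contribution $-\beta\det(u)\sum u^{ij}u_iu_j/(C-u)^2$ from the barrier provided $\beta\ge 1$; taking $\beta=n$ then closes the estimate. With these choices the ``delicate step'' you flag becomes routine bookkeeping.
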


Following \cite{CHLS} we derive a global estimate for the upper bound of $\det (u_{ij})$ for the generalized Abreu Equation \eqref{eqn 1.1}.
This upper bound relates to the Legendre transforms of solutions.

For any point $p$ on $\partial \Delta$, there is an affine coordinate
$\{\xi_1,..., \xi_n\}$, such that, for some $1\leq m \leq n$,
a neighborhood $U\subset \bar\Delta$ of $p$  is defined
by $m$ inequalities
$$\xi_1\ge 0,\quad ...,\quad \xi_m\geq 0,$$
with  $\xi(p)=0.$   Then, $v$ in \eqref{eqn2.1} has the form
$$v=\sum_{i=1}^{m}\xi_i\log \xi_i+\alpha(\xi),$$
where $\alpha$ is a smooth function in $\bar U$.
By Proposition 2 in \cite{D2},  we have the following result.

\begin{lemma}\label{lemma_3.4} There holds
$$\det(v_{ij})= \big[\xi_1\xi_2 ...\xi_m \beta(\xi)\big]^{-1}\quad\text{in }\Delta,$$
where $\beta(\xi)$ is smooth up to the boundary and $\beta(0)=1$.
\end{lemma}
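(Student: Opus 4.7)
The plan is to pull the logarithmic singularities out of $\det(v_{ij})$ by a row-rescaling of the Hessian, and then read off the smooth remainder $\beta$. First I write $v = S + \alpha$ with singular part $S := \sum_{i=1}^{m}\xi_i\log\xi_i$, so that the Hessian splits as $v_{ij} = S_{ij} + \alpha_{ij}$, where $\alpha_{ij}$ is smooth on $\bar U$ and $S_{ij}$ is the diagonal matrix with $S_{ii} = 1/\xi_i$ for $i\le m$ and $S_{ii}=0$ otherwise. Hence all of the singular behaviour of $v_{ij}$ on $\partial\Delta$ is concentrated in the first $m$ diagonal entries.

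Next I multiply the $i$-th row of $(v_{ij})$ by $\xi_i$ for each $i\le m$. The resulting matrix $M(\xi)$ has entries smooth up to the boundary: for $i\le m$ its diagonal entry is $1+\xi_i\alpha_{ii}$ and the off-diagonal entries in that row are $\xi_i\alpha_{ij}$, while the remaining $n-m$ rows coincide with the corresponding rows of $\alpha_{ij}$. The row-rescalings multiply the determinant by $\xi_1\cdots\xi_m$, so setting $\beta(\xi) := 1/\det M(\xi)$ yields
\[
\det(v_{ij}) = \bigl[\xi_1\xi_2\cdots\xi_m\,\beta(\xi)\bigr]^{-1},
\]
and $\beta$ is smooth on $\bar U$ as soon as $\det M(p)\neq 0$.

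To evaluate $\beta(0)$ I substitute $\xi(p)=0$ into $M$. The first $m$ rows collapse to the standard basis rows $e_1,\ldots,e_m$, so $M(0)$ is block lower triangular with upper block $I_m$ and lower block $C := \bigl(\alpha_{ij}(0)\bigr)_{m<i,j\le n}$, giving $\beta(0)=1/\det C$. Using $\alpha \equiv \sum_{k>m}\delta_k\log\delta_k$ modulo affine terms, with $\delta_k(p)>0$ for $k>m$, a direct calculation gives $C_{ij}=\sum_{k>m} n^{(k)}_i n^{(k)}_j/\delta_k(p)$, where $n^{(k)}$ is the inward normal to facet $k$. This matrix is positive definite, since $\Delta$ is bounded and hence the projections of $\{n^{(k)}\}_{k>m}$ onto the face through $p$ must span that face. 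In particular $\det C>0$, and after a linear normalisation of the free coordinates $\xi_{m+1},\ldots,\xi_n$ one obtains $\det C=1$ and hence $\beta(0)=1$.

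The main obstacle I expect is the smoothness verification: one needs to check that every entry of $M$ — not only the rescaled diagonal entries — extends smoothly across $\{\xi_i = 0\}$, and that $\det M$ stays bounded away from zero in a neighbourhood of $p$. Both follow from the explicit expression for $M$ together with the positive definiteness of $C$, but the bookkeeping for general $m\le n$ and the coordinate normalisation step require some care.
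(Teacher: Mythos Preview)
Your argument is correct, and in fact supplies more than the paper does: the paper gives no proof of this lemma at all, simply citing Proposition~2 of Donaldson~\cite{D2}. Your row-rescaling computation is exactly the standard calculation behind that proposition, so there is nothing to compare at the level of strategy.

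One remark on the final step. Your computation shows $\beta(0)=1/\det C$ with $C=(\alpha_{ij}(0))_{m<i,j\le n}$, and as your square/edge type examples will confirm, $\det C$ is \emph{not} equal to $1$ for an arbitrary choice of the transverse coordinates $\xi_{m+1},\ldots,\xi_n$. The paper's phrasing ``there is an affine coordinate\ldots'' is what legitimises your normalisation: a linear change in the last $n-m$ variables preserves the form $v=\sum_{i\le m}\xi_i\log\xi_i+\alpha$ and the local description $\{\xi_1\ge 0,\ldots,\xi_m\ge 0\}$, while rescaling $\det C$ by the square of the Jacobian. Since you have established $\det C>0$ (your spanning argument, or equivalently the strict convexity of $v|_F$ on the open face $F$ through $p$), such a normalisation is always available. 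It would be worth saying this explicitly rather than leaving it as a parenthetical, since otherwise the claim $\beta(0)=1$ looks stronger than what has actually been proved.
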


For any $q\in \Delta$ denote by $d_E(q,\partial \Delta)$ the Euclidean distance from $q$ to $\partial \Delta$.
By Lemma \ref{lemma_3.4}, we have
\begin{equation}\label{eqn 3.2}
\det(v_{ij})\leq \frac{C}{[d_E(p,\partial \Delta)]^n}\quad\text{in  } \Delta,
\end{equation}
where $C$ is a positive constant.
\v

Recall that $p_o\in\Delta$ is the point we fixed for $\mc S_{p_o}$.
Now we choose coordinates $\xi_1,...,\xi_n$ such that $\xi(p_o)=0$. Set
$$x_i=\frac{\partial u}{\partial \xi_i},\;\;\; f=\sum_i x_i\xi_i - u.$$

\begin{lemma}\label{lemma_3.5} Let $\Delta$ be a bounded open polytope in $\real^n$ and $\mathbb{D}>0$, $A$ be smooth functions on $\bar\Delta$.
Let $u\in \mathbf{S}_{p_o}$ be a strictly convex function satisfying the generalized Abreu Equation \eqref{eqn 1.1}.
Assume, for some positive constants $d$ and $b$,
$$\frac{1+\sum x_i^2}{(d + f)^2}\leq b\quad\text{in }\mathbb R^n.$$
Then,
$$\exp\left\{ -\mff C_3 f \right\}\frac{\det (u_{ij})}{\left(d+f\right)^{2n}}\leq \mff C_4
\quad\text{in }\Delta,$$
where $\mff C_3$ is a positive constant depending only on $n$ and $\Delta$,
and $\mff C_4$ is a positive constant depending only on
$n$, $d$, $b$, $\mathbb{D}$ and $\max_{ \bar\Delta}|A|$.
\end{lemma}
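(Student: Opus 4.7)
The plan is to apply the maximum principle, in Legendre-dual coordinates $(x,f)$, to the auxiliary function
$$\Psi(x) := -\log \mathbb{F}(x) \;-\; 2n\log(d+f(x)) \;-\; \mathfrak{C}_3\, f(x).$$
Since $-\log \mathbb{F} = \log \det(u_{ij}) - \log \mathbb{D}$ and $\mathbb{D}$ is uniformly controlled on $\bar\Delta$, an upper bound $\Psi \leq \log \mathfrak{C}_4$ is equivalent to the conclusion of the lemma. The dual formulation \eqref{eqn 2.1}, together with the identities $f_i = \xi_i$ and $Lf = -n$ for $L := -\sum_{i,j}f^{ij}\partial_i\partial_j$, makes the calculation below clean.

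First I would check that $\Psi$ attains its supremum at some finite $x_0\in\mathbb{R}^n$. Using $u-v\in C^\infty(\bar\Delta)$ with Lemma~\ref{lemma_3.4}, $-\log\mathbb{F}$ grows at most linearly in $|x|$ at infinity, while the hypothesis $1+|x|^2 \leq b(d+f)^2$ forces $f(x)\geq |x|/\sqrt{b} - d$. Choosing $\mathfrak{C}_3$ sufficiently large, depending only on $n$ and $\Delta$, so that $-\mathfrak{C}_3 f$ dominates, $\Psi\to -\infty$ at infinity and the supremum is attained. At $x_0$, $\nabla\Psi(x_0) = 0$ gives $(\log \mathbb{F})_i = -K\xi_i$ with $K := 2n/(d+f)+\mathfrak{C}_3$, and combining \eqref{eqn 2.1} with $Lf = -n$ and $L\log(d+f) = -n/(d+f) + \sum f^{ij}\xi_i\xi_j/(d+f)^2$ yields
$$L\Psi \;=\; -A \;-\; \sum_{i,j}f^{ij}(\log\mathbb{F})_i(\log\mathbb{D})_j \;+\; \frac{2n^2}{d+f} \;-\; \frac{2n\sum_{i,j}f^{ij}\xi_i\xi_j}{(d+f)^2} \;+\; n\mathfrak{C}_3.$$
The chain-rule identity $\sum_{i,j}f^{ij}\xi_i(\log \mathbb{D})_j = \sum_i \xi_i(\log\mathbb{D})_{\xi_i}$ shows the cross term is controlled by $|K|\cdot\|\log\mathbb{D}\|_{C^1(\bar\Delta)}\cdot\mathrm{diam}(\Delta)$, and $L\Psi(x_0)\geq 0$ produces an upper bound on $\sum f^{ij}\xi_i\xi_j/(d+f)^2$ at $x_0$.

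The main obstacle is converting this quadratic-form bound at $x_0$ into a pointwise bound on $\Psi(x_0)$ itself, since $L\Psi$ does not involve $\Psi$ directly. To close the estimate I would exploit the gradient relation $(\log\mathbb{F})_i = -K\xi_i$ together with the identity $\det(f^{ij}) = \mathbb{F}/\mathbb{D}$ and the hypothesis $1+|x|^2\leq b(d+f)^2$; in a basis diagonalizing $f^{ij}$ at $x_0$, an AM--GM step applied to $\sum f^{ii}\xi_i^2$ together with the linear relation forced on $\xi$ by the critical-point equation transforms the quadratic-form bound into a positive lower bound $\mathbb{F}(x_0)\geq c(d+f(x_0))^{-2n}e^{-\mathfrak{C}_3 f(x_0)}$, which is exactly $\Psi(x_0)\leq \log \mathfrak{C}_4$. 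Since $\Psi\leq \Psi(x_0)$ on all of $\mathbb{R}^n$, the lemma follows. In this scheme $\mathfrak{C}_3$ is fixed at the boundary-behaviour step and depends only on $n$ and $\Delta$, while $\mathfrak{C}_4$ absorbs the dependence on $d$, $b$, $\mathbb{D}$ and $\max_{\bar\Delta}|A|$.
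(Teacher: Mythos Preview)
Your auxiliary function $\Psi = -\log\mathbb{F} - 2n\log(d+f) - \mathfrak{C}_3 f$ is missing the ingredient that actually closes the estimate, and the gap you flag as ``the main obstacle'' is not repaired by the AM--GM step you sketch. At the interior maximum $x_0$ you correctly obtain
\[
\frac{\sum_{i,j} f^{ij}\xi_i\xi_j}{(d+f)^2}\ \leq\ C,
\]
but this is a bound on the single number $\langle (f^{ij})\xi,\xi\rangle$, not on the trace $\sum_i f^{ii}$. In an orthonormal frame diagonalizing $(f^{ij})$ with eigenvalues $\lambda_1,\ldots,\lambda_n$, the inequality reads $\sum_i \lambda_i\xi_i^2 \leq C(d+f)^2$, and this places no constraint on $\prod_i \lambda_i = \det(f^{ij})=\det(u_{ij})$: some $\xi_i$ may vanish (indeed $\xi=\nabla f(x_0)$ is just an arbitrary point of $\Delta$, possibly $p_o$ itself), and even if all $\xi_i\neq 0$ the AM--GM bound goes the wrong way. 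The ``linear relation forced on $\xi$'' by $(\log\mathbb{F})_i=-K\xi_i$ gives information about $\nabla\log\mathbb{F}$, not about $\mathbb{F}$, so it cannot by itself produce a pointwise bound on $\Psi(x_0)$.

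The paper's device is to add the bounded term $\varepsilon\,\dfrac{1+\sum_k x_k^2}{(d+f)^2}$ to the exponent before applying the maximum principle. Its second derivatives contribute
\[
\varepsilon\,\frac{1+\sum_k x_k^2}{(d+f)^2}\cdot \frac{2\sum_i f^{ii}}{1+\sum_k x_k^2}
\;=\;\frac{2\varepsilon\,\sum_i f^{ii}}{(d+f)^2},
\]
i.e.\ the full trace $\sum_i f^{ii}$ appears in the differential inequality. After absorbing the cross terms (this is where $\varepsilon$ is chosen small using the hypothesis $(1+|x|^2)/(d+f)^2\leq b$), the inequality at the maximum becomes $\varepsilon\sum_i f^{ii}/(d+f)^2 \leq C$, and now AM--GM legitimately yields
\[
\frac{\det(u_{ij})}{(d+f)^{2n}} \;=\; \frac{\prod_i f^{ii}}{(d+f)^{2n}} \;\leq\; \Bigl(\tfrac{1}{n}\sum_i \tfrac{f^{ii}}{(d+f)^2}\Bigr)^{n}\;\leq\; C'.
\]
Your boundary analysis and the handling of the $\mathbb{D}$-cross term are fine; the fix is to modify $\Psi$ by this extra term so that the maximum-principle computation produces the trace rather than merely the quadratic form in the direction $\xi$.
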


\begin{proof}
Let $v$ be given as in \eqref{eqn2.1}.
By adding a linear function, we assume that $v$ is also normalized at $p_o$.
Denote $g=L(v)$. By \eqref{eqn 3.2}, it is straightforward to check that
there exists a positive constant $C_1$ such that
$$\det(v_{ij})e^{-C_1g}\to 0\quad\text{as }p\to \partial \Delta.$$
Since $u=v+\phi $ for some $\phi\in C^\infty(\bar \Delta)$, then
\begin{equation}\label{eqn3.a}\det(u_{ij})e^{-C_1 f }\to 0\quad\text{as }p\to \partial \Delta.
\end{equation}
Consider the function for some constant $\varepsilon$ to be determined,
$$\mathcal{F}=\exp\left\{ -C_1 f + \varepsilon\frac{1+\sum x_i^2}{(d + f)^2}\right\}
\frac{1}{\mathbb F\left(d+f\right)^{2n}},$$
where   $\mathbb{F}$ is defined in \eqref{eqn 3.1}, $\varepsilon$ is a positive number to be determined latter.
Obviously, $\mathcal{F}\to 0$ as $p\in\partial\Delta$.
Assume $\mathcal{F}$ attains its maximum at an interior point $p^*$.
Then at $p^*$, we have
$$
\frac{\p}{\p x_{j}}  \mathcal{F}=0,\;\;\;\;\;\;\;\sum f^{ij}\frac{\p^2  \mathcal{F}}{\p x_{i}\p x_{j}}\leq 0.
$$
Thus,
\begin{equation}\label{eqn 2.10}
-(log\mathbb F)_i - C_1f_{i}- \frac{2n f_{i}}{d+f}
+ \varepsilon\frac{1+\sum x_k^2}{(d + f)^2}\left[\frac{(\sum x_k^2)_{i}}{1+\sum x_k^2}-
2\frac{f_{i}}{d + f}\right]=0,\end{equation}
and
\begin{align}\label{eqn3.4} \begin{split}
&  \sum_{i,j}f^{ij} (\log \mathbb D)_{i}(\log \mathbb F)_{j} + A- C_1n - \frac{2n^2}{d+f}
+  \frac{2n\|\nabla f\|^2}{(d+f)^2}\\
&\quad+\varepsilon\frac{1+\sum x_k^2}{(d + f)^2}
\bigg[\frac{2\sum_{k} f^{kk}}{1+\sum x_k^2}-\frac{\|\nabla\sum x_k^2\|^2}{(1+\sum x_k^2)^2}
-\frac{2n}{d + f} + \frac{2\|\nabla f\|^2}{(d + f)^2}\bigg]\\
&\quad+\varepsilon\frac{1+\sum x_k^2}{(d + f)^2}\left\|\left(\frac{\nabla(\sum x_k^2)}{1+\sum x_k^2}-
\frac{2\nabla f}{d + f}\right)\right\|^2
\leq 0, \end{split}
\end{align}
where we used \eqref{eqn 2.1} and denote $F_{i}=\frac{\p F }{\p x^{i}},F_{ij}=\frac{\p^2 F }{\p x^{i} \p x^{j}}$ for any function $F$.
Since $$\sum \left| \tfrac{\p \log \mathbb D}{\p \xi_{i}} \right|\leq C,$$ and
\begin{equation*}
\sum_{i,j}f^{ij}  \frac{\p\log \mathbb D}{\p x_{i}} \frac{\p \log \mathbb F}{\p x_{j}}=\sum_{i,j,k}f^{ij}\frac{\p \xi_{k}}{\p x_i}\frac{\p\log \mathbb D}{\p \xi_{k}}\frac{\p \log \mathbb  F}{\p x_{i}}=\sum_{i}\frac{\p\log \mathbb D}{\p \xi_{i}}\frac{\p \log \mathbb F}{\p x_{i}},
\end{equation*}
we have
$$
\left|\sum_{i,j}f^{ij} (\log \mathbb D)_{i}(\log \mathbb F)_{j}\right|\leq C\sum |(\log \mathbb  F)_{j}|.
$$
By $\left|\frac{\p f}{\p x_{i}}\right|=|\xi_{i}|\leq diam(\Delta)$,\;$\frac{\sum x_{k}^2}{(d+f)^2}\leq b$ and \eqref{eqn 2.10} we have, at $p^{*},$
\begin{equation}\label{eqn 2.12}
\left|\sum_{i,j}f^{ij} (\log \mathbb D)_{i}(\log \mathbb F)_{j}\right|\leq C \sum |(\log \mathbb F)_{j}|\leq C_{3}.
\end{equation}
where $C_{3}$ is the constant depending only on $b,diam(\Delta)$ and $n.$
Inserting \eqref{eqn 2.12} into \eqref{eqn3.4}, we obtain
\begin{align}\label{eqn3.6}\begin{split}&\varepsilon\frac{1+\sum x_k^2}{(d + f)^2}
\bigg[\frac{2\sum f^{ii}}{1+\sum x_k^2}-\frac{4\langle\nabla \sum x_k^2,\nabla f\rangle}{(1+\sum x_k^2)(d+f)}
-\frac{2n}{d + f} + \frac{6\|\nabla f\|^2}{(d + f)^2}\bigg]\\
&\quad
+  \frac{2n\|\nabla f\|^2}{(d+f)^2}- \frac{2n^2}{d+f}
+A- C_2n-C_{3}\leq 0.\end{split}\end{align}
By the Schwarz inequality, we have
\begin{equation*}
\left|\frac{4\langle\nabla \sum x_k^2,\nabla f\rangle}{(1+\sum x_k^2)(d+f)}\right|
\leq  \frac{\|\nabla\sum x_k^2\|^2}{4(1+\sum x_k^2)^2} + \frac{16\|\nabla f\|^2}{(d + f)^2}.\end{equation*}
Hence,
\begin{equation}\label{eqn3.7}
\left|\frac{4\langle\nabla \sum x_k^2,\nabla f\rangle}{(1+\sum x_k^2)(d+f)}\right|
\leq  \frac{\sum f^{ii}}{(1+\sum x_k^2)^2} + \frac{16\|\nabla f\|^2}{(d + f)^2}.\end{equation}
Combining \eqref{eqn3.6} and \eqref{eqn3.7} yields
\begin{align*}&\varepsilon\frac{1+\sum x_k^2}{(d + f)^2}
\bigg[\frac{\sum f^{ii}}{1+\sum x_k^2}
-\frac{2n}{d + f} - \frac{10\|\nabla f\|^2}{(d + f)^2}\bigg]\\
&\quad
+ \frac{2n\|\nabla f\|^2}{(d+f)^2}- \frac{2n^2}{d+f}
+A- C_2n- C_3\leq 0.\end{align*}
By choosing $\varepsilon>0$ such that $10\varepsilon b\leq 1$, we have
$$\varepsilon\frac{\sum f^{ii}}{(d + f)^2}+ A- C_4\leq 0.$$
By the relation between the geometric mean and the arithmetic mean, we get
$$\frac{\det(u_{ij})}{\left(d+f\right)^{ {2n} }}
=\frac{(\det(f^{ij}))^{-1}}{\left(d+f\right)^{  2n  }}\leq \frac{C(n)(\sum f^{ii})^{n}}{\left(d+f\right)^{  2n  }}\leq C_5.$$
Therefore, $\mathcal{F}(p^*)\le C_6$, and hence $\mathcal{F}\le C_6$ everywhere. The definition of $\mathcal{F}$ and the bound of $\mathbb D$ implies
$$\exp\left\{ -C_2 f \right\}\frac{\det(u_{ij})}{\left(d+f\right)^{ {2n} }}\leq C_7.$$
%We obtain
%$$\exp\left\{ -\mff C_3 f \right\}\frac{\det (D^2u)}{\left(d+f\right)^{2n}}\leq \mff C_4.$$
This is the desired estimate. \end{proof}

\section{ Convergence theorems in section}\label{Sec-Convergence}
\v
Let $\Omega^*\subset \mathbb{R}^n$. Denote by $\mathcal{F}(\Omega^*,C)$ the class of smooth convex functions
defined on $\Omega^*$ such that
$$ \inf_{\Omega^*} {u}=0,\;\;\;
u= C>0\;\;on\;\;\partial \Omega^*.$$
\begin{lemma}\label{lemma 4.1}
Let $\Omega^*\subset \mathbb{R}^n$ be a normalized domain, $u\in \mc F(\Omega^*, C)$ be a function satisfying the generalized Abreu Equation \eqref{eqn 1.1}. Suppose that there is a constant $C_1>0$ such that in $\Omega^*$ \begin{equation}\label{eqn_A}C_1\inv\leq
\det(u_{ij})\leq C_1.\end{equation}
Then for any $\Omega^\circ\subset\subset \Omega^* $, $p>2$, we have the
estimate \begin{equation}\|u\|_{W^{4, p}(\Omega^\circ)}\leq C ,\;\;\;\;\|u\|_{C^{3,\alpha}(\Omega^\circ)}\leq C,
\end{equation} where $C$ depends on $n, p, \mathbb{D}, C_1, \|A\|_{L^{\infty}(\Delta)}$, $dist(\Omega^\circ,
\partial \Omega^* )$.
\end{lemma}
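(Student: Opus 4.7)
Using the identity $\sum_j \partial_j U^{ij} = 0$ with $U^{ij} = \det(u_{kl})u^{ij}$, I would first rewrite the generalized Abreu Equation \eqref{eqn 1.1} as the linearized Monge--Amp\`ere equation
\begin{equation*}
\sum_{i,j} U^{ij}\, w_{ij} = -A\mathbb D \qquad \text{in } \Omega^*, \qquad w := \frac{\mathbb D}{\det(u_{ij})}.
\end{equation*}
Since $u$ is smooth, convex, and $\Omega^*$ is normalized, Caffarelli's strict-convexity and $C^{1,\alpha}$ theorem applied to $u$ (which satisfies a Monge--Amp\`ere equation with $C_1^{-1} \le \det(D^2u) \le C_1$) furnishes the section geometry needed to apply the Caffarelli--Guti\'errez H\"older/Harnack inequality to the equation above. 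Since $w$ is pinched between two positive constants and the right-hand side is bounded, this gives $w \in C^{\alpha'}_{\mathrm{loc}}(\Omega^*)$ for some $\alpha' \in (0,1)$, with constants controlled by $n$, $\mathbb D$, $C_1$, $\|A\|_{L^\infty}$, and distance to $\partial \Omega^*$.

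Consequently $\det(D^2 u) = \mathbb D/w \in C^{\alpha'}_{\mathrm{loc}}$, and Caffarelli's interior $C^{2,\alpha'}$ estimate for the Monge--Amp\`ere equation upgrades $u$ to $C^{2,\alpha'}_{\mathrm{loc}}$. The resulting $C^2$ bound together with $\det(D^2u) \ge C_1^{-1}$ forces the eigenvalues of $D^2u$ to be bounded above and below on compact subsets, so the linear operator $\sum U^{ij}\partial_i\partial_j$ becomes uniformly elliptic with $C^{\alpha'}$ coefficients. Classical Schauder and Calder\'on--Zygmund theory then promote $w$, and hence $\det(D^2 u)$, to $C^{2,\alpha'}$ and $W^{2,p}$ on a slightly smaller domain. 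A second application of Caffarelli's higher-order Schauder and $W^{k,p}$ estimates for Monge--Amp\`ere yields $u \in C^{4,\alpha'} \cap W^{4,p}$ on $\Omega^\circ$, which contains the claimed $C^{3,\alpha}$ and $W^{4,p}$ bounds.

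The hard part is the first H\"older step: the coefficients $U^{ij}$ are not uniformly elliptic in the classical sense (only $\det(U^{ij}) = (\det u_{ij})^{n-1}$ is a priori controlled, while individual eigenvalues of $(U^{ij})$ may degenerate), so ordinary linear elliptic theory does not apply directly. The whole scheme hinges on the Caffarelli--Guti\'errez theory, which supplies Harnack and H\"older estimates in this degenerate setting by exploiting the Monge--Amp\`ere section geometry of $u$. After this step, the remainder is a standard alternating bootstrap between Monge--Amp\`ere Schauder/$W^{k,p}$ estimates for $u$ and linear Schauder/$L^p$ estimates for $w$.
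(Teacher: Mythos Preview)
Your proposal is correct and takes essentially the same approach as the paper: rewrite the equation as the linearized Monge--Amp\`ere equation $\sum U^{ij}\mathbb F_{ij}=-A\mathbb D$, apply the Caffarelli--Guti\'errez H\"older estimate (adapted to the inhomogeneous right-hand side), then Caffarelli's $C^{2,\alpha}$ Monge--Amp\`ere estimate, and bootstrap via standard elliptic theory. The paper's argument is terser---it passes directly from $C^{2,\alpha}$ to $W^{4,p}$ by ``standard elliptic regularity'' and then invokes Sobolev embedding for $C^{3,\alpha}$---but your more explicit alternation between estimates for $u$ and for $w$ is the same mechanism spelled out.
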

\v\n
{\bf Proof.} In \cite{C-G} Caffarelli-Gutierrez  proved a H\"older estimate of
$\det(u_{ij})$ for homogeneous linearized Monge-Amp\`ere { equations}
assuming that the Monge-Amp\`ere measure $\mu[u]$ satisfies some
condition, which is guaranteed by \eqref{eqn_A}. { Consider the
generalized Abreu Equation
$$ \sum U^{ij}\mathbb F_{ij}= -A\mathbb D
,\;\;\;\mathbb{F}:=\frac{\mathbb D}{\det(u_{ij})} $$ where $A\in
L^\infty(\Omega).$ Since $\mathbb{D}\in C^{\infty}(\bar{\Delta})$ and $\mathbb{D}>0$, by the same argument in \cite{C-G}  one can obtain the   H\"older
continuity of $\det(u_{ij})$. Then Caffarelli's $C^{2,\alpha}$
estimates for Monge-Amp\`ere  { equations \cite{C1}} give  us
$$\|u\|_{C^{2,\alpha}(\Omega^*)}\leq C_2.$$
Following from the standard elliptic regularity theory we have
$\|u\|_{W^{4, p}(\Omega^*)}\leq C_3 $. By the Sobolev embedding theorem
\[\|u\|_{C^{3,\alpha}(\Omega^\circ)}\leq C_4  \|u\|_{W^{4, p}(\Omega^*)}.
\]
Then  the lemma follows.
$\blacksquare$
\v
Let $\Omega\subset \mathbb{R}^n$. Denote by $\mathcal{F}(\Omega,C)$ the class of smooth convex functions
defined on $\Omega$ such that
$$ \inf_{\Omega} {f}=0,\;\;\;
f= C>0\;\;on\;\;\partial \Omega.$$ Next we prove
the following convergence theorem.
\begin{theorem}\label{theorem_4.2}
Let $\Omega\subset \mathbb R^{n}$ be a  normalized domain. Let $f_{(k)}\in \mc
F(\Omega,C)$ be a sequence of functions satisfying the equation \begin{equation}\label{equ 4.4}
-\sum_{i,j} f_{(k)}^{ij}\frac{\p^2(\log \mathbb F_{(k)})}{\p x_i \p x_j} -\sum_{i,j}f_{(k)}^{ij}\frac{\p (\log \mathbb F_{(k)})}{\p x_i}\frac{\p (\log \mathbb D)}{\p x_j}=A_{(k)}.
\end{equation}
Suppose that $A_{(k)}$ $C^{m}$-converges to $A$  on $\bar\Omega$ and there are constants $0< C_1<C_2$ independent of $k$ such that
\begin{equation} \label{equ 4.5}
C_1\leq det\left(\frac{\p^2 f_{(k)}}{\p x_i\p x_j}\right)\leq C_2\end{equation}
hold in $\Omega$. Then there exists a subsequence
of functions, without loss of generality, still denoted by
$f_{(k)}$, { locally uniformly } { converging} to a function $f_\infty$ { in $\Omega$} and, for any open set $\Omega_o$ with $\bar{\Omega}_o\subset \Omega$, and for any $\alpha\in (0,1)$,  $f_{(k)}$ $C^{m+3,\alpha}$-converges
to $f_\infty$ in $\Omega_o$.

\v
\end{theorem}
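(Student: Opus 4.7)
The plan is to upgrade the uniform determinant bound \eqref{equ 4.5} to uniform higher-order interior bounds on $f_{(k)}$, and then apply Arzel\`a-Ascoli with a diagonal exhaustion of $\Omega$.

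First I would recast \eqref{equ 4.4}, after multiplying through by $\det(f_{(k)ij})$, as a linearized Monge-Amp\`ere equation for $\psi_{(k)}:=\log \mathbb{F}_{(k)}$:
\[
\sum_{i,j} F_{(k)}^{ij}\,\psi_{(k),ij} \;+\; \sum_{i,j} F_{(k)}^{ij}(\log\mathbb{D})_j\,\psi_{(k),i}\;=\;-A_{(k)}\det(f_{(k)ij}),
\]
with cofactor matrix $F_{(k)}^{ij}:=\det(f_{(k)ij})\,f_{(k)}^{ij}$. By \eqref{equ 4.5} the Monge-Amp\`ere measure of $f_{(k)}$ is comparable to Lebesgue measure, so the Caffarelli-Guti\'errez H\"older estimate, applied exactly as in the proof of Lemma \ref{lemma 4.1} (and with the bounded lower-order drift $F_{(k)}^{ij}(\log\mathbb{D})_j$ absorbed as a perturbation), yields a uniform $C^{\alpha_0}$-bound for $\psi_{(k)}$ on every $\Omega_o\subset\subset\Omega$, for some $\alpha_0\in(0,1)$ independent of $k$; equivalently, $\det(f_{(k)ij})$ is uniformly $C^{\alpha_0}$.

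The rest of the argument is a standard Caffarelli-Schauder bootstrap. Caffarelli's interior $C^{2,\alpha}$ estimate for the Monge-Amp\`ere equation furnishes uniform $C^{2,\alpha_0}$-bounds on $f_{(k)}$. Viewing \eqref{equ 4.4} as a linear second-order elliptic equation in $\psi_{(k)}$ with coefficients $-f^{ij}_{(k)}$ and drift $-f^{ij}_{(k)}(\log\mathbb{D})_j$ now uniformly in $C^{\alpha_0}$, Schauder theory together with $A_{(k)}\to A$ in $C^m(\bar\Omega)$ gives $\psi_{(k)}\in C^{2,\alpha_0}$ uniformly, hence $\det(f_{(k)ij})\in C^{2,\alpha_0}$. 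Caffarelli's higher-order Monge-Amp\`ere estimate then upgrades $f_{(k)}$ to $C^{4,\alpha_0}$. Iterating this two-step cycle, and exhausting the $C^m$-regularity of $A_{(k)}$, produces uniform $C^{m+4,\alpha_0}$-bounds on $f_{(k)}$ on each $\Omega_o\subset\subset\Omega$.

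Finally, $\{f_{(k)}\}\subset\mathcal{F}(\Omega,C)$ is uniformly bounded and, by the previous step, uniformly bounded in $C^{m+4,\alpha_0}$ on each relatively compact subdomain. Since $C^{m+4,\alpha_0}$ embeds compactly into $C^{m+3,\alpha}$ for every $\alpha\in(0,1)$ on compact sets, a diagonal Arzel\`a-Ascoli argument across an exhaustion of $\Omega$ extracts a subsequence converging in $C^{m+3,\alpha}_{\mathrm{loc}}(\Omega)$ for every $\alpha\in(0,1)$ to some $f_\infty$. The main obstacle I anticipate is the initial H\"older step: because of the drift term $\sum f_{(k)}^{ij}(\log\mathbb{F}_{(k)})_i(\log\mathbb{D})_j$, equation \eqref{equ 4.4} is not the purely homogeneous linearized Monge-Amp\`ere equation to which Caffarelli-Guti\'errez applies directly, and one must either invoke a version of their theorem that tolerates a bounded drift, or adapt their argument as is implicitly done in Lemma \ref{lemma 4.1}. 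Once that hurdle is cleared, the remaining bootstrap and compactness arguments are routine.
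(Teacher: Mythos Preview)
Your approach differs substantially from the paper's, and the difference is precisely the point you flag as the ``main obstacle.'' The paper does \emph{not} attempt to apply Caffarelli--Guti\'errez on the $x$-side to equation~\eqref{equ 4.4} at all; indeed, the authors state in the introduction that this cannot be done directly. Instead, after extracting a locally uniform limit $f_\infty$ (which is $C^{1,\alpha}$ and strictly convex by standard Monge--Amp\`ere theory under \eqref{equ 4.5}), they pass to the Legendre transforms $u_{(k)}$ and $u_\infty$. On the $\xi$-side the equation becomes the drift-free form $\sum U^{ij}\mathbb F_{ij}=-A\mathbb D$, to which Lemma~\ref{lemma 4.1} applies verbatim. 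The technical heart of the paper's proof is a geometric Claim showing that sections $\bar S_{u_\infty}(0,C)$ are compact and trap a Euclidean ball, so that Lemma~\ref{lemma 4.1} can be invoked on $u_{(k)}$; the $C^{m+3,\alpha}$ convergence of $u_{(k)}$ then transfers back to $f_{(k)}$ via the Legendre transform.

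Your direct route is not obviously wrong, but it rests on a step the paper deliberately sidesteps. Note in particular that Lemma~\ref{lemma 4.1} is stated and proved only for the $\xi$-side equation \eqref{eqn 1.1}, which has no first-order term; it does \emph{not} ``implicitly'' handle the drift in \eqref{equ 4.4}, so your appeal to it is misplaced. To make your argument rigorous you would need an honest extension of the Caffarelli--Guti\'errez H\"older estimate to $\sum F^{ij}\psi_{ij}+b^i\psi_i=g$ with merely bounded $b^i$ (here $b^i=\det(f_{(k)st})\,\partial_{\xi_i}\log\mathbb D$ after the computation $F^{ij}(\log\mathbb D)_j=\det(f_{(k)st})\,\partial_{\xi_i}\log\mathbb D$). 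Such extensions exist in later literature, but they are not in \cite{C-G} and not proved in this paper. The Legendre-transform detour buys exactly this: it eliminates the drift so that only the classical \cite{C-G} result is needed, at the cost of the section-control Claim for $u_\infty$.
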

\v\n
{\bf Proof.} It is obvious that there exists a subsequence
of functions, locally uniformly converging to a function $f_\infty$ in $\Omega$. A fundamental result on Monge-Amp\`ere equation tell us that $f_\infty$ is $C^{1,\alpha}$ and strictly convex (see \cite{Gui}).
Suppose that $f_\infty(p)=0$ for some point $p\in \Omega$. We choose the coordinates $x=(x_1,...,x_n)$ such that $x(p)=0$.
Put
$$u_{(k)}=\sum x_i\frac{\p f_{(k)}}{\p x_i} - f_{(k)},\;\;\;\Omega^*_{(k)}=\nabla f_{(k)}(\Omega),$$
$$u_{\infty}=\sum x_i\frac{\p f_{\infty}}{\p x_i} - f_{\infty},\;\;\;\Omega^*_{\infty}=\nabla f_{\infty}(\Omega).$$
We have $f_{\infty}(0)=u_{\infty}(0)=0$.
The key point of the proof of the Theorem is the following
\v\n
{\bf Claim.} There are constants $C>0$, $b> r >0$ such that $\bar{S}_{u_{\infty}}(0,C)$ is compact and
$$D_r(0)\subset S_{u_{\infty}}(0,C)\subset D_b(0).$$
\v\n

$$
x_{n+1}=f(x)
$$

{\bf Proof of Claim.} Denote
$$M:=\{(x, f_{\infty}(x))| x\in \Omega\},\;\;\;M^*:=\{(\xi, u_{\infty}(\xi))| \xi\in \Omega^*_{\infty}\}.$$
Then $M$ is a $C^{1,\alpha}$ strictly convex hypersurface with the support hyperplane $H=\{x | x_{n+1}=0\}$ at $0$. We look at the geometry meaning of $u_{\infty}$. Let $q\in \Omega$ be a point near $0$. The support hyperplane of $M$ at $(q,f_{\infty}(q))$ is given by
$$H_{(q,f_{\infty}(q))}=\left\{ (x_1,...,x_n,x_{n+1})| \sum x_i\frac{\p f}{\p x_i}(q) + x_{n+1}=\sum x_i(q)\frac{\p f}{\p x_i}(q) + f(q)\right\}.$$
The intersection
\begin{equation} \label{equ 4.2}
H_{(q,f_{\infty}(q))}\bigcap \{(0,...,0,x_{n+1}\}=(0,...,0, -u_{\infty}(q)).\end{equation}
In particular, we have
\v
{\bf ($\star$)} $u_{\infty}$ is monotonically increase along every ray from $0$: $\{x_i=a_it, t\geq 0,  \;i=1,...,n\}$,
where $a_i$ are constants with $\sum a_i^2=1$.
\v
\begin{figure}[t]
\includegraphics[width=6cm]{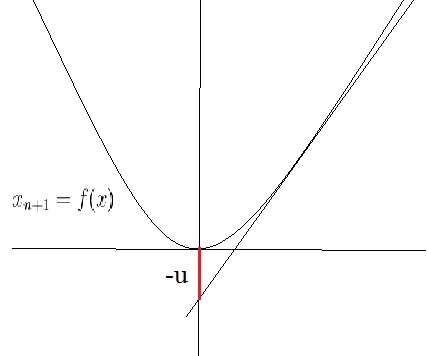}
\centering
\end{figure}

\v\n
By strictly convexity of $f_{\infty}$ we can find $b_1>b_2>0$, $d_1>d_2>0$ such that
\begin{itemize}
\item[(1)] $\bar{S}_{f_{\infty}}(0,b_2)\subset \bar{S}_{f_{\infty}}(0,b_1)\subset D_{d_1}(0)\subset \Omega$,
\item[(2)] $dist\left(\bar{S}_{f_{\infty}}(0,b_2), \p \bar{S}_{f_{\infty}}(0,b_1)\right)\geq d_2$.
\end{itemize}
Then
$$|\nabla f_{\infty}|\leq \frac{b_1}{d_1}\;\;\;\;\forall x\in \bar{S}_{f_{\infty}}(0,b_2).$$
It follows that $\bar{S}_{u_{\infty}}(0,C)\subset D_b(0)$ for some constant $b>0$. By compactness we can find $p\in \p S_{f_{\infty}}(0,b_2)$ such that $u_{\infty}(p)=min_{\p S_{f_{\infty}}(0,b_2)}\{u_{\infty}\}.$
By {\bf ($\star$)} we can find a set $\Omega^\circ\subset \bar{S}_{f_{\infty}}(0,b_2)\}$ such that
$$u_{\infty}(x)=u_{\infty}(p)\;\;\;\forall x\in \p \Omega^\circ.$$
Let $q\in \p\Omega^\circ$ be the point with $f_{\infty}(q)=min_{\p\Omega^\circ}\{f_{\infty}\}$.
By the strictly convexity of $f_{\infty}$, we have $f_{\infty}(q)>0$. Consider the convex cone $V$ with vertex $(0,0)$ and the base
$$\{(x_1,x_2,...,x_n, f_{\infty}(q))|x_1^2 + ... + x_n^2=d_1^2\}.$$
By the comparison theorem of normal maps, there exists a Euclidean ball $D_{r}(0)$ such that $D_{r}(0)\subset \nabla f_{\infty}(\bar{S}_{f_{\infty}}(0, f_{\infty}(q))$. We choose $C=u_{\infty}(p)$. Then
$$D_r(0)\subset S_{u_{\infty}}(0,C)\subset D_b(0).$$
The claim follows.

\v\n
By the claim we conclude that
$$\bar{S}_{u_{(k)}}(0,C/2):=\{\xi |u_{(k)}\leq C/2\}$$
is compact and contain a Euclidean ball for $k$ large enough. By \eqref{equ 4.5} we have
\begin{equation} \label{equ 4.3}
\frac{1}{C_2}\leq det\left(\frac{\p^2 u_{(k)}}{\p \xi_i\p \xi_j}\right)\leq \frac{1}{C_1}\end{equation}
A direct calculation shows that  $u_{(k)}$ satisfy the generalized Abreu Equation \eqref{eqn 1.1}. By Lemma \ref{lemma 4.1} $u_{(k)}$ $C^{m+3}$-converges to $u_{\infty}$. It follows that $f_{(k)}$ $C^{m+3}$-converges to $f_{\infty}$ in a neighborhood of $0$.
\v
Now let $p\in \Omega$ be an arbitrary point, let $l$ be the linear function defining the support hyperplane of $M$ at $(p,f_{\infty}(p))$. Let
$$\tilde{f}_{\infty}= f_{\infty}- l.$$
We use $\tilde{f}_{\infty}$ instead $f$ and use the same argument above. The theorem follows. $\blacksquare$

\section{Proof of the Main Theorem}\label{Sec-ProofMain}

Since $(\Delta,\mathbb{D},A)$ is uniformly $K$-stable and $A^{(k)}$ converges to $A$ smoothly in $\bar \Delta$,
then $(\Delta, \mathbb{D},A^{(k)})$ is uniformly $K$-stable for large $k$, i.e.,
$\Delta$ is $(A^{(k)},\mathbb{D},\lambda)$-stable
for some constant $\lambda>0$ independent of $k$.
Since $u^{(k)}$ satisfies the generalized Abreu Equation \eqref{eqn 1.1}, then
$$\mc L_{A_k}(u^{(k)})=\int_{\Delta}\sum_{i,j} (u^{(k)})^{ij}(u^{(k)})_{ij}\mathbb{D}d\mu=n\int_{\Delta}\mathbb{D}d\mu
$$ and hence,
$$\int_{\partial \Delta} u d \sigma\leq n\lambda^{-1}\frac{max_{\Delta}\mathbb{D}}{min_{\Delta}\mathbb{D}} \mbox{Area}(\Delta).
$$
It follows that $u^{(k)}$ locally and
uniformly converges to a convex function $u$ in $\Delta$. %(cf. \cite{D1})
\v

 {\it Claim.} For any point $\xi\in \Delta$ and any
$B_{\delta}(\xi)\subset \Delta$, there exists a point
$\xi_o\in B_{\delta}(\xi)$ such that $u$ has second derivatives
and is strictly convex at $\xi_o$. Here, $B_{\delta}(\xi)$ denotes the
Euclidean ball centered at $\xi$ with radius $\delta.$

\v

The proof of the claim is the same as in \cite{CLS5}, see also \cite{CHLS}.

\v
We now choose coordinates such that $\xi_o=0$.
By adding linear functions, we assume that all $u^{(k)}$ and $u$ are normalized at $0$.
%({\bf Why do we still have convergence after adjusting by linear functions?})
Since $u$ is strictly convex at $0$, there exist constants  $\epsilon'>0$, $d_2>d_1>0$ and $b' >0$,
independent of $k$, such that, for large $k$,
$$B_{d_1}(0)\subset\bar{S}_{u^{(k)}}(0,\epsilon')\subset B_{d_2}(0)\subset \Delta,$$ and
$$
\sum_i \left(\frac{\partial u^{(k)}}{\partial \xi_i}\right)^2\leq b'
\quad\text{in } S_{u^{(k)}}(0,\epsilon').$$  By
Lemma \ref{lemma_3.1} and Lemma \ref{lemma_3.3}, we have
%uniform estimates for $\det(D^2u^{(k)})$
\begin{equation}\label{eqn 4.3}
C_1\leq \det(u_{ij}^{(k)}) \leq C_2
\quad\text{in }S_{u^{(k)}}(0,\frac{1}{2}\epsilon'),\end{equation}
where $C_1<C_2$ are positive constants independent of $k$.
By Lemma \ref{lemma 4.1} $\{u^{(k)}\}$ converges smoothly to $u$.
Therefore, $u$ is  a smooth and
strictly convex function in $S_u(0,\epsilon'/2)$.
\v
Let $f^{(k)}$ be the Legendre transform of $u^{(k)}$.
Then, $\{f^{(k)}\}$ locally
uniformly converges to a convex function $f$ defined in the
whole $\mathbb{R}^n$. Furthermore, in a neighborhood of $0$,
$f$ is a smooth and strictly convex function such that its
Legrendre transform $u$ satisfies the generalized Abreu Equation \eqref{eqn 1.1}.
By the convexity of $f^{(k)}$ and the local and uniform convergence of
$\{f^{(k)}\}$ to $f$, we conclude, for any $k$,
$$\frac{1+\sum_i x_i^2}{(d+ f^{(k)})^2} \leq b\quad\text{in }\mathbb R^n,$$
%({\bf Where does it hold?})
and, for any $C>1$,
$$B_r(0)\subset S_{f^{(k)}}(0,C)\subset B_{R_C}(0),$$
for some positive  constants $d$, $b$, $r$ and $R_C=R(C)>0$. By Lemma \ref{lemma_3.1} and Lemma \ref{lemma_3.5}, we have
$$\exp\{-\mff C_3 C\}\frac{1}{(d+C)^{2n}}\leq \det (f_{ij}^{(k)})\leq \mff C_1 .$$
We note that each $f^{(k)}$ satisfies \eqref{equ 4.4}. By Theorem \ref{theorem_4.2}
we conclude that $\{f^{(k)}\}$ uniformly and smoothly converges to $f$
in $S_f(0,C/2)$. Since $C$ is arbitrary, $f$ is a smooth and strictly convex function in $\mathbb{R}^n$,
and the sequence $\{f^{(k)}\}$ locally and smoothly converges to $f$. By Legendre transforms,
we obtain that $u$ is a smooth and strictly convex function in $\Delta$
and that the sequence $\{u^{(k)}\}$ locally and smoothly converges to $u$.
This completes the proof of Theorem \ref{theorem_1.2}.
\v\v\v\v

\bibliography{<your-bib-database>}

\end{document}